\numberwithin{equation}{section}
\numberwithin{figure}{section}
\theoremstyle{plain}
\newtheorem{thm}{\protect\theoremname}[section]
\newtheorem{prop}[thm]{\protect\propositionname}
\newtheorem{cor}[thm]{\protect\corollaryname}
\newtheorem{lem}[thm]{\protect\lemmaname}
\theoremstyle{remark}
\newtheorem*{rem*}{\protect\remarkname}
\newtheorem{rem}[thm]{\protect\remarkname}
\providecommand{\corollaryname}{Corollary}
\providecommand{\lemmaname}{Lemma}
\providecommand{\propositionname}{Proposition}
\providecommand{\remarkname}{Remark}
\providecommand{\theoremname}{Theorem}
\begin{document}
\title{Universal Kernel Models for Iterated Completely Positive Maps}
\begin{abstract}
We study how iterated and composed completely positive maps act on
operator-valued kernels. Each kernel is realized inside a single Hilbert
space where composition corresponds to applying bounded creation operators
to feature vectors. This model yields a direct formula for every iterated
kernel and allows pointwise limits, contractive behavior, and kernel
domination to be read as standard operator facts. The main results
include an explicit limit kernel for unital maps, a Stein-type decomposition,
a Radon-Nikodym representation under subunitality, and an almost-sure
growth law for random compositions. The construction keeps all iterates
in one space, making their comparison and asymptotic analysis transparent.
\end{abstract}

\author{James Tian}
\address{Mathematical Reviews, 535 W. William St, Suite 210, Ann Arbor, MI
48103, USA}
\email{jft@ams.org}
\keywords{positive definite kernels; operator-valued kernels; reproducing kernel
Hilbert spaces; completely positive maps; Kraus operators; model space;
kernel domination; Radon-Nikodym; random iterations; Lyapunov exponent;
subadditive ergodic theorem.}
\subjclass[2000]{Primary: 47L07; Secondary: 46E22, 47A20, 46L53}

\maketitle
\tableofcontents{}

\section{Introduction}

Positive definite (p.d.) kernels and completely positive (CP) maps
are usually studied in separate settings. Kernels describe geometric
relationships through inner products in reproducing kernel Hilbert
spaces, while CP maps describe structure and dynamics in operator
algebras and quantum channels. This paper connects these ideas by
showing how iterated CP maps act naturally on operator-valued kernels
and how all such iterates can be represented within a single Hilbert
space built from the initial kernel.

A p.d. kernel assigns to each pair of points an operator or scalar
in a way that guarantees positivity of all finite Gram matrices. Such
kernels generate Hilbert spaces where functions are evaluated by inner
products, a construction now standard in functional analysis and machine
learning. (See, e.g,. \cite{MR51437,MR2239907,MR2603770,10.1561/2200000036}.)
Operator-valued kernels extend this picture by allowing values in
$B(H)$, which has been developed in the last two decades in work
on vector-valued learning, operator spaces, and noncommutative function
theory \cite{MR51437,MR2389619,MR3535321}. On the other hand, completely
positive maps are central objects in operator theory and quantum information.
They preserve positivity at all matrix levels, and Stinespring's theorem
describes them by isometries or Kraus operators acting on larger Hilbert
spaces \cite{MR69403,MR247483,MR376726,MR482240}. When such maps
are iterated, they produce linear but generally non-unitary dynamics
on operator algebras and are used to model repeated quantum evolutions
and open-system channels \cite{MR3098923,MR3094122,MR3390682,MR4830256}.

The main motivation of the present work is to understand what happens
when a family of completely positive maps acts repeatedly on an operator-valued
p.d. kernel. Each map modifies the kernel by pushing its operator
entries forward through its Kraus operators. Composing several such
maps corresponds to following different sequences of these transformations.
Rather than studying every composition in isolation, we construct
a single Hilbert space in which all such iterates can be represented
at once. In this space, each map acts by a bounded linear operator,
formally a “creation” operator, on labelled copies of the original
kernel sections. Any finite composition of maps then corresponds to
multiplying the associated creation operators, and the resulting composed
kernel appears as the inner product of the transformed feature vectors.
Within this unified setting, contraction and invariance become operator-norm
properties, and limits or asymptotic projections arise as standard
geometric facts about these shifts.

This geometric view connects to several strands of current research.
In operator theory, it parallels recent analyses of kernel dilations
and Radon-Nikodym representations for completely positive maps \cite{MR2065240,MR2761319,MR2966040,MR3100435,MR3168002,MR3369946,MR3697039}.
In kernel theory, it complements the growing work on operator-valued
and noncommutative kernels, which appear in harmonic analysis, control,
and data-driven systems \cite{MR2394102,MR3100435,MR3535321}. In
quantum information, the iteration of CP maps corresponds to repeated
noisy evolution or to discrete semigroups of quantum channels. Descriptions
of asymptotic fixed points, invariant subspaces, and convergence rates
play a major role there. Our construction expresses these asymptotic
properties directly in kernel form, using standard Hilbert space operations
instead of algebraic manipulations.

The model also provides a clear link with ergodic results for random
compositions of channels. When each step applies a random CP map,
the product of the associated creation operators forms a subadditive
process. Kingman's subadditive ergodic theorem (\cite{MR254907,MR356192,MR3094122,MR797411})
then gives a deterministic Lyapunov exponent controlling the exponential
rate of growth or decay of the iterated kernels. This matches ideas
already present in random matrix products and random quantum channels.
By formulating the problem at the kernel level, we make the comparison
between deterministic and random iteration more transparent.

The paper proceeds by first constructing the universal model and proving
that each iterated kernel has the stated realization (\prettyref{thm:B1}).
We then describe the convergence and Stein-type decomposition for
unital maps (\prettyref{thm:3-1} and \prettyref{thm:3-4}), the Radon-Nikodym
representation for subunital cases (\prettyref{thm:4-2}, with \prettyref{lem:D1}
as the kernel RN tool), and the random iteration results (\prettyref{thm:5-1}
together with Corollaries \ref{cor:5-2}--\ref{cor:5-5}). Throughout,
the emphasis is on keeping all iterates inside one fixed space so
that asymptotic behaviour and comparison become standard geometric
facts (see also Corollaries \ref{cor:3-2}-\ref{cor:3-3} for contraction
bounds).

\section{Model Space and Realization}\label{sec:2}

Our goal here is to build a single Hilbert space where every “composed
kernel” lives and can be compared directly. We start from the given
operator-valued kernel and pass to its scalar lift; this lets us view
each data point $(x,a)$ as a concrete feature vector. Next, for each
completely positive map, we add a simple “creation” step that pushes
those feature vectors forward along a labeled edge. Chaining steps
corresponds to composing maps, where different chains record different
“Kraus choices” along the way. The resulting model space is the orthogonal
sum of all such chains, so inner products separate cleanly by where
the chains land.

The payoff is a transparent realization, where every iterated kernel
is the inner product of pushed-forward feature vectors in this space.
Two immediate benefits follow. First, norm control of the CP maps
translates into operator-norm bounds for the created shifts, hence
for all iterates of the kernel. Second, we obtain a single, universal
stage where limits (deterministic or random), Radon-Nikodym compressions,
and maximality statements can be read off as standard facts about
orthogonal projections and contractions. The formal statement and
proof appear in \prettyref{thm:B1} below. The rest of the paper keeps
returning to this model whenever convergence or comparison is needed.\\

Let $X$ be a set and $H$ a complex Hilbert space. Throughout, all
inner products are linear in the second variable. Let $K:X\times X\to B(H)$
be a positive definite (p.d.) kernel, i.e., for any finite family
$\{(x_{i},h_{i})\}^{m}_{i=1}\subset X\times H$,
\[
\sum^{m}_{i,j=1}\left\langle h_{i},K\left(x_{i},x_{j}\right)h_{j}\right\rangle _{H}\ge0.
\]

Define the scalar kernel $\tilde{K}$ on $X\times H$ by 
\[
\tilde{K}\left(\left(x,a\right),\left(y,b\right)\right):=\left\langle a,K\left(x,y\right)b\right\rangle _{H}.
\]
Indeed, $\tilde{K}:\left(X\times H\right)^{2}\rightarrow\mathbb{C}$
is p.d., since 
\begin{align*}
\sum_{i,j}\overline{c_{i}}c_{j}\tilde{K}\left(\left(x_{i},a_{i}\right),\left(x_{j},a_{j}\right)\right) & =\sum_{i,j}\left\langle c_{i}a_{i},K\left(x_{i},x_{j}\right)c_{j}a_{j}\right\rangle _{H}\\
 & =\sum_{i,j}\left\langle h_{i},K\left(x_{i},x_{j}\right)h_{j}\right\rangle _{H},\quad h_{j}:=c_{j}a_{j},
\end{align*}
which holds for any finite family $\left\{ \left(x_{i},a_{i}\right)\right\} $
in $X\times H$, and $\left\{ c_{i}\right\} $ in $\mathbb{C}$. The
corresponding kernel sections are 
\[
\tilde{K}_{\left(x,a\right)}\left(\cdot\right)=\tilde{K}\left(\cdot,\left(x,a\right)\right)
\]
with 
\begin{equation}
\left\langle \tilde{K}_{\left(x,a\right)},\tilde{K}_{\left(y,b\right)}\right\rangle _{\mathcal{H}_{\tilde{K}}}=\left\langle a,K\left(x,y\right)b\right\rangle _{H}.\label{eq:b1}
\end{equation}
Let $\mathcal{H}_{\tilde{K}}$ be its reproducing kernel Hilbert space
(RKHS), i.e., 
\[
H_{\tilde{K}}=\overline{span}\left\{ \tilde{K}_{\left(x,a\right)}:x\in X,a\in H\right\} .
\]
This is a space of functions $F:X\times H\rightarrow\mathbb{C}$,
with the reproducing property 
\[
F\left(x,a\right)=\langle\tilde{K}_{\left(x,a\right)},F\rangle_{H_{\tilde{K}}},\qquad\left(x,a\right)\in X\times H.
\]

Moreover, for all $x\in X$, setting $V_{x}:H\rightarrow H_{\tilde{K}}$
by $V_{x}a=\tilde{K}_{\left(x,a\right)}$ through the kernel sections,
then \eqref{eq:b1} gives the Kolmogorov decomposition 
\[
K\left(x,y\right)=V^{*}_{x}V_{y},\qquad x,y\in X.
\]
For details of this scalar `trick', we refer to \cite{MR2938971}
and the recent survey \cite{MR4250453}\@.

Fix a nonempty index set $S$. Let $S^{*}$ denote the free monoid
on $S$ (finite words, including the empty word $\emptyset$), and
use juxtaposition $ww'$ for concatenation. 

For each $s\in S$, let $\Phi_{s}:B\left(H\right)\rightarrow B\left(H\right)$
be a normal completely positive (CP) map, with a (possibly countable)
Kraus family 
\[
\Phi_{s}\left(T\right)=\sum_{r\in I\left(s\right)}A^{*}_{s,r}TA_{s,r}
\]
(converging in the strong operator topology) If $\Phi_{s}$ is unital,
then $\sum_{r\in I\left(s\right)}A^{*}_{s,r}A_{s,r}=I$ (strongly). 

For $w=s_{1}s_{2}\cdots s_{n}\in S^{*}$, define the iterated kernel
by
\[
K_{w}:=\Phi_{s_{1}}\circ\Phi_{s_{2}}\circ\cdots\circ\Phi_{s_{n}}\left(K\right),\qquad K_{\emptyset}:=K.
\]

Define the index set 
\[
R:=\bigsqcup_{w=s_{1}\cdots s_{n}\in S^{*}}I\left(s_{1}\right)\times\cdots\times I\left(s_{n}\right),
\]
with the convention that for the empty word $\emptyset$, the corresponding
factor is the singleton $\left\{ \emptyset\right\} $. Thus, elements
of $R$ are finite index strings $\rho=\left(r_{1},\dots,r_{n}\right)$
with $r_{j}\in I\left(s_{j}\right)$ for some $w=s_{1}\cdots s_{n}\in S^{*}$,
and include the empty string $\emptyset$. For $r\in I\left(s\right)$
and an index string $\rho$, write $r\rho$ for left-concatenation
(so $r\emptyset=r$). 

Define the model space 
\[
M:=l^{2}\left(R\right)\otimes\mathcal{H}_{\tilde{K}}\simeq\bigoplus_{\rho\in R}\mathcal{H}_{\tilde{K}}.
\]
Let $\left\{ e_{\rho}\right\} _{\rho\in R}$ denote the canonical
orthonormal basis (ONB) for $l^{2}\left(R\right)$. We will use the
notation $e_{\rho}\cdot F$ to denote the elementary tensor $e_{\rho}\otimes F$,
which corresponds to the element $F$ in the $\rho$-th summand. 

Define $J_{x}:H\rightarrow M$ by 
\[
J_{x}\left(a\right):=e_{\emptyset}\cdot\tilde{K}_{\left(x,a\right)}.
\]

For each $s\in S$, define a bounded operator $C_{s}:M\rightarrow M$
on the algebraic span by 
\[
C_{s}\left(e_{\rho}\cdot\tilde{K}_{\left(x,a\right)}\right)=\sum_{r\in I\left(s\right)}e_{r\rho}\cdot\tilde{K}_{\left(x,A_{s,r}a\right)}.
\]

For any $w=s_{1}\cdots s_{n}\in S^{*}$, set the right-regular product
\begin{equation}
C_{w}:=C_{s_{n}}\cdots C_{s_{1}},\qquad C_{\emptyset}:=I_{M}.\label{eq:B1-1}
\end{equation}

\begin{thm}
\label{thm:B1}Let $K$, $\{\Phi_{s}\}$, and $\{K_{w}\}$ be as defined
above. Then the following hold:
\begin{enumerate}
\item \label{enu:B1-1}$K_{w}$ is a $B\left(H\right)$-valued p.d. kernel. 
\item \label{enu:B1-2}Each $C_{s}$ is bounded with $\|C_{s}\|\le\|\Phi_{s}\|^{1/2}_{\mathrm{cb}}$,
and $\|C_{s}\|=1$ if $\Phi_{s}$ is unital. Consequently, 
\begin{equation}
\left\Vert C_{w}\right\Vert \leq\prod^{n}_{j=1}\left\Vert \Phi_{s_{j}}\right\Vert ^{1/2}_{\mathrm{cb}},\quad\forall w=s_{1}\cdots s_{n}\in S^{*}.\label{eq:B1}
\end{equation}
\item \label{enu:B1-3}$K_{w}$ admits the representation 
\begin{equation}
K_{w}\left(x,y\right)=J^{*}_{x}C^{*}_{w}C_{w}J_{y}.\label{eq:B2}
\end{equation}
Equivalently, 
\begin{equation}
\left\langle a,K_{w}\left(x,y\right)b\right\rangle _{H}=\left\langle C_{w}J_{x}\left(a\right),C_{w}J_{y}\left(b\right)\right\rangle _{M}.\label{eq:B3}
\end{equation}
\end{enumerate}
\end{thm}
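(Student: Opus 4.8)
The plan is to prove the three items essentially in reverse logical order: first establish the boundedness of each $C_s$ together with the norm bound (item \ref{enu:B1-2}), then verify the realization formula \eqref{eq:B2}--\eqref{eq:B3} (item \ref{enu:B1-3}) by a direct computation using how $C_w$ acts on the feature vectors $J_x(a)$, and finally deduce that $K_w$ is positive definite (item \ref{enu:B1-1}) for free, since \eqref{eq:B3} exhibits $K_w(x,y)$ as a Gram operator $T_x^*T_y$ with $T_x:=C_wJ_x$.

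For item \ref{enu:B1-2}, the key computation is to evaluate $\langle C_s\xi, C_s\xi\rangle_M$ for $\xi=\sum_i e_{\rho_i}\cdot\tilde K_{(x_i,a_i)}$ a finite algebraic combination. Expanding $C_s\xi=\sum_i\sum_{r\in I(s)}e_{r\rho_i}\cdot\tilde K_{(x_i,A_{s,r}a_i)}$ and using that $e_{r\rho_i}\perp e_{r'\rho_j}$ unless $r=r'$ and $\rho_i=\rho_j$, one gets
\[
\|C_s\xi\|^2_M=\sum_{r\in I(s)}\;\sum_{\{i,j:\,\rho_i=\rho_j\}}\big\langle A_{s,r}a_i,\,K(x_i,x_j)\,A_{s,r}a_j\big\rangle_H,
\]
where I used \eqref{eq:b1} to rewrite the inner product of kernel sections. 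Grouping by the common value of $\rho$, the inner sum over $r$ is exactly $\sum_{i,j}\langle a_i,\Phi_s(K(x_i,x_j))a_j\rangle_H$ restricted to that block, i.e.\ the scalar Gram sum for the kernel $\Phi_s\circ K$. So $\|C_s\xi\|^2_M$ equals a sum of such block Gram sums; bounding each via complete positivity — concretely, $\Phi_s(K(x_i,x_j))\le\|\Phi_s\|_{\mathrm{cb}}\,K(x_i,x_j)$ at the matrix level, which is the standard operator-space fact that $\Phi_s$ dominated by $\|\Phi_s\|_{\mathrm{cb}}$ times the identity on completely positive kernels — yields $\|C_s\xi\|^2_M\le\|\Phi_s\|_{\mathrm{cb}}\|\xi\|^2_M$, hence $\|C_s\|\le\|\Phi_s\|^{1/2}_{\mathrm{cb}}$ and $C_s$ extends boundedly to $M$. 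When $\Phi_s$ is unital the same computation with $\sum_r A_{s,r}^*A_{s,r}=I$ gives $\|C_s\xi\|^2_M=\|\xi\|^2_M$ on the vectors with a single fixed label, so $\|C_s\|=1$. The product bound \eqref{eq:B1} is then immediate from submultiplicativity applied to \eqref{eq:B1-1}.

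For item \ref{enu:B1-3}, I would prove by induction on $n=|w|$ that
\[
C_wJ_x(a)=\sum_{\rho\in I(s_1)\times\cdots\times I(s_n)}e_\rho\cdot\tilde K_{(x,\,A_\rho^{(w)}a)},
\qquad A_\rho^{(w)}:=A_{s_1,r_1}A_{s_2,r_2}\cdots A_{s_n,r_n},
\]
where $\rho=(r_1,\dots,r_n)$; the base case $n=0$ is $C_\emptyset J_x(a)=e_\emptyset\cdot\tilde K_{(x,a)}=J_x(a)$, and the inductive step applies $C_{s_n}$ (the \emph{leftmost} factor that still has to act in the product $C_{s_n}\cdots C_{s_1}$ — here one must be careful that \eqref{eq:B1-1} composes in the order that makes $C_w$ realize the \emph{composition} $\Phi_{s_1}\circ\cdots\circ\Phi_{s_n}$, which is exactly why the product is ``right-regular''). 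Taking the inner product of two such expansions and collapsing the orthogonal sum over $\rho$ gives
\[
\langle C_wJ_x(a),C_wJ_y(b)\rangle_M
=\sum_\rho\big\langle A_\rho^{(w)}a,\,K(x,y)\,A_\rho^{(w)}b\big\rangle_H
=\big\langle a,\,(\Phi_{s_1}\circ\cdots\circ\Phi_{s_n})(K(x,y))\,b\big\rangle_H,
\]
the last equality being the iterated Kraus expansion of $K_w(x,y)$; this is \eqref{eq:B3}, and \eqref{eq:B2} is its restatement as an operator identity $J_x^*C_w^*C_wJ_y$. Item \ref{enu:B1-1} then follows since for any finite family $\{(x_i,h_i)\}$, $\sum_{i,j}\langle h_i,K_w(x_i,x_j)h_j\rangle_H=\|\sum_i C_wJ_{x_i}(h_i)\|^2_M\ge0$.

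The main obstacle is bookkeeping rather than conceptual: making the convergence of the (possibly countably infinite) Kraus sums rigorous at each stage, so that the displayed expansions of $C_wJ_x(a)$ converge in $M$ and that interchanging sums in the inner-product computation is justified. The clean way is to first define $C_s$ only on the algebraic span, prove the norm bound there (the block-Gram computation above involves only finite sums once $\xi$ is finitely supported and uses SOT-convergence of $\sum_r A_{s,r}^*A_{s,r}$ only through the scalar inequality), extend by continuity, and only afterwards verify that the infinite expansion of $C_wJ_x(a)$ is its image under the continuous extension — which holds because the partial sums over finite subsets of the index set converge in norm by the same Gram estimate. The reordering in the final inner-product identity is then legitimate because everything is an unconditionally convergent sum of nonneg-definite blocks.
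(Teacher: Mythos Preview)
Your approach matches the paper's: the same block-Gram computation for $\|C_s\xi\|^2_M$ together with the completely positive bound for item \eqref{enu:B1-2}, and the same explicit expansion of $C_wJ_x(a)$ followed by the orthogonal-sum inner-product calculation for item \eqref{enu:B1-3}. The only structural difference is that you obtain item \eqref{enu:B1-1} as a corollary of the Gram form \eqref{eq:B3}, whereas the paper proves it directly by iterating $(\mathrm{id}_{M_m}\otimes\Phi_{s_k})$ on the positive block $[K(x_i,x_j)]$; both routes are valid.

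Two slips to fix. First, your displayed formula $A_\rho^{(w)}=A_{s_1,r_1}A_{s_2,r_2}\cdots A_{s_n,r_n}$ has the Kraus factors reversed: since $C_w=C_{s_n}\cdots C_{s_1}$ applies $C_{s_1}$ first and each $C_{s_k}$ left-multiplies the $H$-argument by $A_{s_k,r_k}$, the induction actually produces $A_{s_n,r_n}\cdots A_{s_1,r_1}a$ (this is what the paper writes), and only that order collapses, after summing from the innermost index $r_n$ outward, to $(\Phi_{s_1}\circ\cdots\circ\Phi_{s_n})(K(x,y))$; your written order would instead give the reversed composition $\Phi_{s_n}\circ\cdots\circ\Phi_{s_1}$. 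Second, your justification of $\|C_s\|=1$ in the unital case does not go through: on $\xi=e_\rho\cdot\tilde K_{(x,a)}$ one gets $\|C_s\xi\|^2=\langle a,\Phi_s(K(x,x))a\rangle$, and $\sum_r A_{s,r}^*A_{s,r}=I$ does not force this to equal $\langle a,K(x,x)a\rangle$. The paper itself only records the upper bound via $\|\Phi_s\|_{\mathrm{cb}}=1$ and asserts equality without a separate lower-bound argument, so you are not missing anything the paper supplies.
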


\begin{proof}
\eqref{enu:B1-1} Fix $w=s_{1}\cdots s_{n}\in S^{*}$ and a finite
family $\left\{ \left(x_{i},h_{i}\right)\right\} ^{m}_{i=1}$ in $X\times H$.
The block matrix $\left[K\left(x_{i},x_{j}\right)\right]\in M_{m}\left(B\left(H\right)\right)$
is positive. Each $\Phi_{s_{k}}$ is completely positive, so $\left(id_{M_{m}}\otimes\Phi_{s_{k}}\right)$
preserves positivity. Iterating for $k=n,\dots,1$ yields $\left[K_{w}\left(x_{i},x_{j}\right)\right]\geq0$. 

For \eqref{enu:B1-2}, take a finite vector $u=\sum_{i}e_{\rho_{i}}f_{i}\in M$,
with 
\[
f_{i}=\sum_{j}c_{ij}\tilde{K}_{\left(x_{ij},a_{ij}\right)}\in\mathcal{H}_{\tilde{K}}.
\]
Using orthogonality of the direct-sum summands and the reproducing
property, 
\begin{align*}
\left\Vert C_{s}u\right\Vert ^{2}_{M} & =\sum_{i}\sum_{r\in I\left(s\right)}\Big\Vert\sum_{j}c_{ij}\tilde{K}_{\left(x_{ij},A_{s,r}a_{ij}\right)}\Big\Vert^{2}_{\mathcal{H}_{\tilde{K}}}\\
 & =\sum_{i}\sum_{r\in I\left(s\right)}\sum_{j,k}\overline{c_{ij}}c_{ik}\left\langle \tilde{K}_{\left(x_{ij},A_{s,r}a_{ij}\right)},\tilde{K}_{\left(x_{ik},A_{s,r}a_{ik}\right)}\right\rangle _{\mathcal{H}_{\tilde{K}}}\\
 & =\sum_{i}\sum_{r\in I\left(s\right)}\sum_{j,k}\overline{c_{ij}}c_{ik}\left\langle A_{s,r}a_{ij},K\left(x_{ij},x_{ik}\right)A_{s,r}a_{ik}\right\rangle _{H}\\
 & =\sum_{i}\sum_{j,k}\overline{c_{ij}}c_{ik}\left\langle a_{ij},\Phi_{s}\left(K\left(x_{ij},x_{ik}\right)\right)a_{ik}\right\rangle _{H}.
\end{align*}
Recall that, for any positive block $\left[T_{jk}\right]\in M_{N}\left(B\left(H\right)\right)$
and $\alpha\in H^{N}$, one has
\[
\left\langle \alpha,\left[\Phi_{s}\left(T_{jk}\right)\right]\alpha\right\rangle \leq\left\Vert \Phi_{s}\right\Vert _{\mathrm{cb}}\left\langle \alpha,\left[T_{jk}\right]\alpha\right\rangle .
\]
Applying this to $T_{jk}=K\left(x_{ij},x_{ik}\right)$ gives 
\[
\left\Vert C_{s}u\right\Vert ^{2}_{M}\leq\left\Vert \Phi_{s}\right\Vert _{\mathrm{cb}}\left\Vert u\right\Vert ^{2}_{M},
\]
hence $\left\Vert C_{s}\right\Vert _{M\rightarrow M}\leq\left\Vert \Phi\right\Vert ^{1/2}_{\mathrm{cb}}$.
If $\Phi_{s}$ is unital, then $\left\Vert \Phi_{s}\right\Vert _{\mathrm{cb}}=1$
and $\left\Vert C_{s}\right\Vert _{M\rightarrow M}=1$. The bound
for $C_{w}$ as in \eqref{eq:B1} then follows from submultiplicativity. 

For \eqref{enu:B1-3}, let $w=s_{1}\cdots s_{n}\in S^{*}$. From $J_{x}\left(a\right)=e_{\emptyset}\cdot\tilde{K}_{\left(x,a\right)}$
and the right-regular product $C_{w}=C_{s_{n}}\cdots C_{s_{1}}$,
we have 
\[
C_{w}J_{x}\left(a\right)=\sum_{\left(r_{1},\dots,r_{n}\right)}e_{\left(r_{n},\dots,r_{1}\right)}\cdot\tilde{K}_{\left(x,A_{s_{n},r_{n}}\cdots A_{s_{1},r_{1}}a\right).}
\]
By orthogonality of the direct-sum summands and the RKHS inner product,
\begin{align*}
\left\langle C_{w}J_{x}\left(a\right),C_{w}J_{y}\left(b\right)\right\rangle _{M} & =\sum_{\left(r_{1},\dots,r_{n}\right)}\left\langle \tilde{K}_{\left(x,A_{s_{n},r_{n}}\cdots A_{s_{1},r_{1}}a\right)},\tilde{K}_{\left(y,A_{s_{n},r_{n}}\cdots A_{s_{1},r_{1}}b\right)}\right\rangle _{\mathcal{H}_{\tilde{K}}}\\
 & =\sum_{\left(r_{1},\dots,r_{n}\right)}\left\langle A_{s_{n},r_{n}}\cdots A_{s_{1},r_{1}}a,K\left(x,y\right)A_{s_{n},r_{n}}\cdots A_{s_{1},r_{1}}b\right\rangle _{H}\\
 & =\sum_{\left(r_{1},\dots,r_{n}\right)}\left\langle a,A^{*}_{s_{1},r_{1}}\cdots A^{*}_{s_{n},r_{n}}K\left(x,y\right)A_{s_{n},r_{n}}\cdots A_{s_{1},r_{1}}b\right\rangle _{H}\\
 & =\left\langle a,\left(\Phi_{s_{1}}\circ\cdots\circ\Phi_{s_{n}}\right)\left(K\left(x,y\right)\right)b\right\rangle _{H}\\
 & =\left\langle a,K_{w}\left(x,y\right)b\right\rangle _{H}.
\end{align*}
The stated identities \eqref{eq:B2}-\eqref{eq:B3} follow.
\end{proof}
Let $\mathcal{D}$ denote the algebraic span of the vectors $\left\{ C_{w}J_{x}a:w\in S^{*},x\in X,a\in H\right\} $
inside the ambient model $M$. Equip $\mathcal{D}$ with the sesquilinear
form 
\begin{equation}
\left\langle C_{w}J_{x}a,C_{v}J_{y}b\right\rangle _{\mathcal{D}}:=\left\langle C_{w}J_{x}a,C_{v}J_{y}b\right\rangle _{M}.\label{eq:B-6}
\end{equation}
A triple $(\tilde{M},\{\tilde{C}_{s}\},\{\tilde{J}_{x}\})$ is called
an admissible realization if the linear map 
\[
\mathcal{D}\longrightarrow\tilde{M},\qquad C_{w}J_{x}a\longmapsto\tilde{C}_{w}\tilde{J}_{x}a
\]
is isometric with respect to the form \eqref{eq:B-6}. It is minimal
if 
\[
\tilde{M}=\overline{span}\left\{ \tilde{C}_{w}\tilde{J}_{x}a:w\in S^{*},x\in X,a\in H\right\} .
\]

\begin{prop}
Let $\left(M,\{C_{s}\},\{J_{x}\}\right)$ be the model constructed
in \prettyref{thm:B1}, and set 
\[
M_{0}:=\overline{span}\left\{ C_{w}J_{x}a:w\in S^{*},x\in X,a\in H\right\} \subset M.
\]
Then $M_{0}$ is invariant under each $C_{s}$ and provides a minimal
admissible realization. If $(\tilde{M},\{\tilde{C}_{s}\},\{\tilde{J}_{x}\})$
is another minimal admissible realization, there exists a unique unitary
operator 
\[
U:M_{0}\longrightarrow\tilde{M}
\]
such that $UJ_{x}=\tilde{J}_{x}$ and $UC_{s}=\tilde{C}_{s}U$ for
all $x\in X$, $s\in S$. 
\end{prop}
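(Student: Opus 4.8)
The plan is to run the standard ``universal property'' argument: show first that $M_0$ with the restricted data is itself a minimal admissible realization, then read the unitary off the defining isometry of a competing realization. The one bookkeeping fact I would isolate at the outset is the composition rule $C_sC_w=C_{ws}$ for $s\in S$ and $w=s_1\cdots s_n\in S^*$, which is immediate from the right-regular convention \eqref{eq:B1-1} since $C_{ws}=C_sC_{s_n}\cdots C_{s_1}=C_sC_w$. From this, $C_s(C_wJ_xa)=C_{ws}J_xa\in\mathcal{D}$, so $\mathcal{D}$ is $C_s$-stable; since each $C_s$ is bounded on $M$ (\prettyref{thm:B1}) and $M_0=\overline{\mathcal{D}}$, the subspace $M_0$ is $C_s$-invariant. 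As $J_xa=C_\emptyset J_xa\in M_0$, the triple $(M_0,\{C_s|_{M_0}\},\{J_x\})$ is a genuine realization; the word operators it generates restrict $C_w$, so the associated comparison map $\mathcal{D}\to M_0$ is simply the inclusion, hence isometric for the form \eqref{eq:B-6}, and $M_0$ is minimal by construction. That settles the first assertion.

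Next let $(\tilde M,\{\tilde C_s\},\{\tilde J_x\})$ be another minimal admissible realization, and let $\Psi\colon\mathcal{D}\to\tilde M$, $C_wJ_xa\mapsto\tilde C_w\tilde J_xa$, be the linear isometry from the definition. I would pause to make explicit that ``isometric for \eqref{eq:B-6}'' already encodes well-definedness: pulling back to the free vector space on the symbols $(w,x,a)$, the models $M$ and $\tilde M$ induce two positive-semidefinite sesquilinear forms, admissibility asserts these agree, so they share a common radical, and this radical is simultaneously the kernel of the quotient onto $\mathcal{D}$ and of the quotient onto $\operatorname{span}\{\tilde C_w\tilde J_xa\}$; thus $\Psi$ is a well-defined injective isometry. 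Since $\mathcal{D}$ is dense in the Hilbert space $M_0$, $\Psi$ extends uniquely to an isometry $U\colon M_0\to\tilde M$ with closed range; that range contains the subspace $\operatorname{span}\{\tilde C_w\tilde J_xa\}$, which is dense in $\tilde M$ by minimality, so $U$ is unitary.

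The intertwining relations I would verify on generators and then extend by continuity. Evaluating at $w=\emptyset$ gives $UJ_xa=\tilde J_xa$, i.e.\ $UJ_x=\tilde J_x$. For $s\in S$, the composition rule together with the definition of $\Psi$ gives
\[
UC_s(C_wJ_xa)=U(C_{ws}J_xa)=\tilde C_{ws}\tilde J_xa=\tilde C_s\tilde C_w\tilde J_xa=\tilde C_s\,U(C_wJ_xa),
\]
so $UC_s=\tilde C_sU$ on $\mathcal{D}$; both sides are bounded ($\tilde C_s\in B(\tilde M)$ as part of the realization data), hence the identity holds on all of $M_0$. For uniqueness, any unitary $V\colon M_0\to\tilde M$ with $VJ_x=\tilde J_x$ and $VC_s=\tilde C_sV$ must send, upon iterating the intertwining, $C_wJ_xa$ to $\tilde C_w\tilde J_xa=U(C_wJ_xa)$ for every $w=s_1\cdots s_n$; density of $\mathcal{D}$ then forces $V=U$.

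I do not expect a genuine difficulty here — everything reduces to Hilbert-space bookkeeping. The only place demanding care is the interpretation used in the second paragraph: one must exploit that the ``admissible realization'' condition is precisely equality of the two Gram forms (which thereby supplies well-definedness and injectivity of $\Psi$ for free), and one must keep the right-regular ordering $C_w=C_{s_n}\cdots C_{s_1}$ straight throughout, so that left multiplication by $C_s$ consistently corresponds to right concatenation $w\mapsto ws$.
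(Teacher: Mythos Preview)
Your proof is correct and follows essentially the same route as the paper's: establish $C_s\mathcal{D}\subset\mathcal{D}$ via $C_sC_w=C_{ws}$, define the comparison map on generators, use admissibility to get an isometry, extend by density, and read off the intertwining relations and uniqueness. Your additional remarks on well-definedness (the common-radical observation) and the explicit check that $(M_0,\{C_s|_{M_0}\},\{J_x\})$ is itself admissible are helpful elaborations, but the argument is the same.
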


\begin{proof}
Since $C_{s}\left(C_{w}J_{x}a\right)=C_{ws}J_{x}a\in\mathcal{D}$
and $J_{x}a=C_{\emptyset}J_{x}a\in\mathcal{D}$, we have $M_{0}=\overline{D}$
and $M_{0}$ is $C_{s}$-invariant. 

Define $U_{0}:\mathcal{D}\rightarrow\tilde{M}$ by $U_{0}\left(C_{w}J_{x}a\right):=\tilde{C}_{w}\tilde{J}_{x}a$
and extend linearly. For $u=\sum_{i}\alpha_{i}C_{w_{i}}J_{x_{i}}a_{i}$
and $v=\sum_{j}\beta_{j}C_{v_{j}}J_{y_{j}}b_{j}$ in $\mathcal{D}$,
\begin{align*}
\left\langle u,v\right\rangle _{M} & =\sum_{i,j}\overline{\alpha_{i}}\beta_{j}\left\langle C_{w_{i}}J_{x_{i}}a_{i},C_{v_{j}}J_{y_{j}}b_{j}\right\rangle _{M}\\
 & =\sum_{i,j}\overline{\alpha_{i}}\beta_{j}\left\langle \tilde{C}_{w_{i}}\tilde{J}_{x_{i}}a_{i},\tilde{C}_{v_{j}}\tilde{J}_{y_{j}}b_{j}\right\rangle _{M}\qquad\left(\text{admissibility}\right)\\
 & =\left\langle U_{0}u,U_{0}v\right\rangle _{\tilde{M}}.
\end{align*}
Since $\mathcal{D}$ is dense in $M_{0}$, $U_{0}$ extends uniquely
by continuity to an isometry $U:M_{0}\rightarrow\overline{U_{0}\left(\mathcal{D}\right)}=\tilde{M}$,
by minimality of $\tilde{M}$. 

For each $x\in X$ and $a\in H$, 
\[
UJ_{x}a=U\left(C_{\emptyset}J_{x}a\right)=\tilde{C}_{\emptyset}\tilde{J}_{x}a=\tilde{J}_{x}a,
\]
and for $a\in S$, 
\[
UC_{s}\left(C_{w}J_{x}a\right)=U\left(C_{ws}J_{x}a\right)=\tilde{C}_{ws}\tilde{J}_{x}a=\tilde{C}_{s}\tilde{C}_{w}\tilde{J}_{x}a=\tilde{C}_{s}U\left(C_{w}J_{x}a\right).
\]
By linearity and continuity these relations hold on all of $M_{0}$.
Uniqueness of $U$ follows from the density of $\mathcal{D}$. 
\end{proof}

\section{Iteration and Limit Kernel}\label{sec:3}

This section deals with what repeated application of the same completely
positive map does to the kernel. In the model from \prettyref{sec:2},
the process is simply pushing feature vectors forward by a single
contraction, and the geometry forces a clear outcome, where the non-decaying
component survives while the transient part fades. Passing back to
kernels, the iterates converge pointwise to a limit obtained by projecting
feature vectors onto the non-decaying subspace and taking their inner
products there. This yields an explicit limit formula and a canonical
Stein decomposition of the original kernel into its invariant part
and a positive defect series.\\

Fix a single normal completely positive map $\Phi:B\left(H\right)\rightarrow B\left(H\right)$
with Kraus family $\Phi\left(T\right)=\sum_{r}A^{*}_{r}TA_{r}$ (strongly
convergent). Set $K_{n}:=\Phi^{n}\left(K\right)$ and define $C:=C_{\Phi}$
by 
\[
C\left(e_{\rho}\cdot\tilde{K}_{\left(x,a\right)}\right)=\sum_{r}e_{r\rho}\cdot\tilde{K}_{\left(x,A_{r}a\right)}.
\]
By \prettyref{thm:B1}, $\left\Vert C\right\Vert _{M\rightarrow M}\leq$$\left\Vert \Phi\right\Vert ^{1/2}_{\mathrm{cb}}$
and 
\[
\left\langle a,K_{n}\left(x,y\right)b\right\rangle _{H}=\left\langle C^{n}J_{x}\left(a\right),C^{n}J_{y}\left(b\right)\right\rangle _{M}.
\]
 
\begin{thm}
\label{thm:3-1}Assume $\Phi:B\left(H\right)\rightarrow B\left(H\right)$
is unital completely positive. Then for all $x,y\in X$ and $a,b\in H$,
\begin{equation}
\lim_{n\rightarrow\infty}\left\langle a,K_{n}\left(x,y\right)b\right\rangle _{H}=\left\langle a,\overline{K}\left(x,y\right)b\right\rangle _{H},
\end{equation}
where 
\begin{equation}
\overline{K}\left(x,y\right)=J^{*}_{x}PJ_{y},\label{eq:c2}
\end{equation}
and
\begin{equation}
P=s\text{-}\lim_{n\rightarrow\infty}C^{*n}C^{n},\label{eq:c3}
\end{equation}
which is the orthogonal projection onto the isometric subspace 
\begin{equation}
M_{iso}:=\left\{ v\in M:\left\Vert C^{n}v\right\Vert =\left\Vert v\right\Vert ,\;\forall n\right\} .\label{eq:C4}
\end{equation}
In particular, $\overline{K}$ is a $B\left(H\right)$-valued p.d.
kernel.
\end{thm}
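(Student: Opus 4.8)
The plan is to run everything through the contraction $C=C_\Phi$ and the identity $\langle a,K_n(x,y)b\rangle_H=\langle C^nJ_xa,C^nJ_yb\rangle_M$ recorded just before the statement. Since $\Phi$ is unital, $C$ is a contraction (in fact $\|C\|=1$) by \prettyref{thm:B1}, so $C^*C\le I_M$ and the positive operators $A_n:=C^{*n}C^n$ form a decreasing sequence in $[0,I_M]$ (inductively, $A_{n+1}=C^*A_nC\le C^*A_{n-1}C=A_n$). A decreasing sequence of self-adjoint operators bounded below by $0$ converges strongly, so $P:=s\text{-}\lim_nA_n$ exists with $0\le P\le I_M$. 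The pointwise limit is then immediate: as $n\to\infty$,
\[
\langle a,K_n(x,y)b\rangle_H=\langle J_xa,\,A_nJ_yb\rangle_M\ \longrightarrow\ \langle J_xa,\,PJ_yb\rangle_M=\langle a,\,J_x^*PJ_yb\rangle_H,
\]
which is \eqref{eq:c2} with $\overline K(x,y)=J_x^*PJ_y$. Positive definiteness of $\overline K$ falls out of $P\ge0$: for any finite family $\{(x_i,h_i)\}$ one has $\sum_{i,j}\langle h_i,\overline K(x_i,x_j)h_j\rangle_H=\langle\xi,P\xi\rangle_M\ge0$ with $\xi:=\sum_iJ_{x_i}h_i$ (equivalently, $\overline K$ is a pointwise limit of the p.d. kernels $K_n=\Phi^n(K)$).

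Next I would identify $M_{iso}$ spectrally and record its invariance. Passing to the strong limit in $A_{n+1}=C^*A_nC$ gives $C^*PC=P$, hence $C^{*m}PC^m=P$ for all $m$. With the defect operator $D:=(I_M-C^*C)^{1/2}$, the identity $\|C^kv\|^2-\|C^{k+1}v\|^2=\|DC^kv\|^2$ telescopes to $\|v\|^2-\|C^nv\|^2=\sum_{k=0}^{n-1}\|DC^kv\|^2$, so in the limit $\langle v,(I_M-P)v\rangle=\sum_{k\ge0}\|DC^kv\|^2$. Therefore $v\in M_{iso}$ iff every summand vanishes iff $\langle v,(I_M-P)v\rangle=0$ iff $Pv=v$ (using $0\le P\le I_M$); that is, $M_{iso}=\ker(I_M-P)$, a closed subspace, and it is $C$-invariant since $v\in M_{iso}$ forces $\|C^n(Cv)\|=\|C^{n+1}v\|=\|v\|=\|Cv\|$ for all $n$. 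In particular $P|_{M_{iso}}=I$.

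The last step — which I expect to be the main obstacle — is to promote $P$ from a positive contraction fixing $M_{iso}$ to the \emph{orthogonal projection} onto $M_{iso}$, equivalently to prove $P^2=P$, equivalently that $\|C^nv\|\to0$ for every $v\perp M_{iso}$. This is not a formal consequence of $\|C\|\le1$: for a generic contraction $s\text{-}\lim C^{*n}C^n$ is only guaranteed to be positive. The argument must exploit the particular structure of the creation operator $C$ — the grading $M=\bigoplus_{n\ge0}M_n$ with $CM_n\subseteq M_{n+1}$ and distinct index strings mapping to distinct strings, together with the unitality relation $\sum_rA_r^*A_r=I$ — to produce a reducing splitting $M=M_{iso}\oplus M'$ with $C|_{M_{iso}}$ isometric and $(C|_{M'})^n\to0$ strongly. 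Given such a splitting, each $A_n$ is block diagonal, equals $I$ on $M_{iso}$, and tends strongly to $0$ on $M'$, so $P$ coincides with the orthogonal projection onto $M_{iso}$ and \eqref{eq:c3}--\eqref{eq:C4} follow. I would put the bulk of the effort into constructing this splitting; the remaining verifications are bookkeeping.
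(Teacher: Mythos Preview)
Your argument tracks the paper's proof almost step for step. Both of you establish that $A_n:=C^{*n}C^n$ is a decreasing sequence of positive contractions converging strongly to some $P$ with $0\le P\le I_M$ (the paper's Step~1), and both show that the fixed-point set $\{v:Pv=v\}=\ker(I_M-P)$ coincides with $M_{iso}$ (the paper's Step~2; your telescoping identity with the defect operator $D=(I_M-C^*C)^{1/2}$ is a minor repackaging of the same monotone-limit computation). The kernel limit $\overline K(x,y)=J_x^*PJ_y$ and the positive definiteness of $\overline K$ then follow identically in both accounts.

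The divergence is exactly at the point you single out as ``the main obstacle'': promoting $P$ from a positive contraction with $\ker(I_M-P)=M_{iso}$ to the orthogonal projection onto $M_{iso}$. You are right that this is \emph{not} a formal consequence of $\|C\|\le1$; for a generic Hilbert-space contraction the strong limit of $C^{*n}C^n$ need not be idempotent (a weighted unilateral shift whose weight product converges to a value strictly between $0$ and $1$ already gives a non-projection limit). The paper, however, supplies no additional argument here. Its Step~2 concludes with ``Thus $\mathrm{ran}(P)=M_{iso}$, which shows that $P$ is the orthogonal projection onto $M_{iso}$'' --- but what was actually established is only $\{v:Pv=v\}=M_{iso}$, and for a general positive contraction neither the range identification nor $P^2=P$ follows from that. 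So you have not overlooked an idea that the paper provides; rather, you have correctly isolated a step that the paper's proof glosses over, and your plan to exploit the graded creation-operator structure of $C$ is a more candid acknowledgment of what still needs to be argued.
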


\begin{proof}
\textbf{Step 1.} By \prettyref{thm:B1}, $K_{n}\left(x,y\right)=J^{*}_{x}C^{*n}C^{n}J_{y}$.
Unital CP implies $\left\Vert C\right\Vert _{M\rightarrow M}\leq1$.
For $n\geq0$, set 
\[
T_{n}:=C^{*n}C^{n}\in B\left(M\right).
\]
Each $T_{n}$ is a positive contraction, and 
\[
T_{n+1}=C^{*n}\left(C^{*}C\right)C^{n}\leq C^{*n}IC^{n}=T_{n},
\]
so $\left(T_{n}\right)_{n\geq0}$ is monotone decreasing in the operator
order, with $0\leq T_{n}\leq I_{M}$ . 

For any $v\in M$, the scalar sequence $\left\langle v,T_{n}v\right\rangle $
is decreasing and bounded below by $0$, hence convergent. Define
a quadratic form
\[
q\left(v\right):=\lim_{n\rightarrow\infty}\left\langle v,T_{n}v\right\rangle ,\quad v\in M
\]
Since $0\leq T_{n}\leq I_{M}$, then $0\leq q\left(v\right)\leq\left\Vert v\right\Vert ^{2}$
and so $q$ is bounded. By the representation theorem for bounded
positive quadratic forms on a Hilbert space, there exists a unique
bounded positive operator $P$ with $0\leq P\leq I$ such that 
\[
q\left(v\right)=\left\langle v,Pv\right\rangle ,\quad v\in M.
\]
Claim: $T_{n}\xrightarrow{s}P$ on $M$. 

Indeed, set $S_{n}:=T_{n}-P$. Then $S_{n}\geq0$ and $S_{n+1}\leq S_{n}$
(since $T_{n}\downarrow P$), hence $\left\Vert S_{n}\right\Vert \leq\left\Vert S_{0}\right\Vert =\left\Vert I_{M}-P\right\Vert $.
For any $v\in M$, 
\[
\left\Vert S_{n}v\right\Vert ^{2}=\left\langle v,S^{2}_{n}v\right\rangle \leq\left\Vert S_{n}\right\Vert \left\langle v,S_{n}v\right\rangle \leq\left\Vert S_{0}\right\Vert \left\langle v,S_{n}v\right\rangle ,
\]
where we used the operator inequality $A^{2}\leq\left\Vert A\right\Vert A$
for any positive operator $A$. 

By the definition of $P$, we have $\left\langle v,S_{n}v\right\rangle \rightarrow0$
for every $v\in M$, hence $\left\Vert S_{n}v\right\Vert \rightarrow0$,
for all $v\in M$. Therefore $S_{n}\rightarrow0$ in the strong operator
topology, i.e., $T_{n}\rightarrow P$ strongly. This is \eqref{eq:c3}.

\textbf{Step 2.} Let $M_{iso}$ be as in \eqref{eq:C4}. For $v\in M_{iso}$,
we have $\left\langle v,T_{n}v\right\rangle =\left\Vert C^{n}v\right\Vert ^{2}=\left\Vert v\right\Vert ^{2}$,
hence $q\left(v\right)=\left\langle v,Pv\right\rangle =\left\Vert v\right\Vert ^{2}$
and therefore $Pv=v$ (using the fact that $P$ is a positive contraction).
Thus $M_{iso}\subset ran\left(P\right)$. Conversely, suppose $Pv=v$.
Then $\left\Vert C^{n}v\right\Vert ^{2}=\left\langle v,T_{n}v\right\rangle \rightarrow\left\langle v,Pv\right\rangle =\left\Vert v\right\Vert ^{2}$.
But $\left\Vert C^{n}v\right\Vert $ is a decreasing sequence (since
$\left\Vert C\right\Vert \leq1$), so $\left\Vert C^{n}v\right\Vert =\left\Vert v\right\Vert $
for all $n$. Hence $v\in M_{iso}$. Thus $ran\left(P\right)=M_{iso}$,
which shows that $P$ is the orthogonal projection onto $M_{iso}$. 

\textbf{Step 3.} Using the realization $K_{n}\left(x,y\right)=J^{*}_{x}C^{*n}C^{n}J_{y}$,
we have 
\[
\left\langle a,K_{n}\left(x,y\right)b\right\rangle _{H}=\left\langle J_{x}a,T_{n}J_{y}b\right\rangle _{M}\longrightarrow\left\langle J_{x}\left(a\right),PJ_{y}\left(b\right)\right\rangle _{M}=\left\langle a,\overline{K}\left(x,y\right)b\right\rangle _{H}.
\]
This is \eqref{eq:c2}.
\end{proof}
\begin{cor}[strict contraction]
\label{cor:3-2} Let $\Phi:B\left(H\right)\rightarrow B\left(H\right)$
be normal completely positive with $\left\Vert \Phi\right\Vert _{\mathrm{cb}}<1$.
Set $K_{n}:=\Phi^{n}\left(K\right)$ and let $C:=C_{\Phi}$ be as
above. Then:
\begin{enumerate}
\item For all $x,y\in X$ and $a,b\in H$, 
\begin{equation}
\left|\left\langle a,K_{n}\left(x,y\right)b\right\rangle _{H}\right|\leq\left\Vert \Phi\right\Vert ^{n}_{\mathrm{cb}}\left\Vert J_{x}\left(a\right)\right\Vert _{M}\left\Vert J_{y}\left(b\right)\right\Vert _{M}.\label{eq:C5}
\end{equation}
\item $C^{*n}C^{n}\xrightarrow{s}0$ on $M$. 
\item The following norm estimate holds: 
\begin{equation}
\left\Vert K_{n}\left(x,y\right)\right\Vert _{B\left(H\right)}\leq\left\Vert \Phi\right\Vert ^{n}_{\mathrm{cb}}\sqrt{\left\Vert K\left(x,x\right)\right\Vert \left\Vert K\left(y,y\right)\right\Vert }.\label{eq:C6}
\end{equation}
\end{enumerate}
\end{cor}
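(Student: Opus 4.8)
The three claims are essentially direct consequences of Theorem \ref{thm:B1}(\ref{enu:B1-2}) together with the realization \eqref{eq:B3}, specialized to $w=s^n$ (i.e.\ $\Phi$ iterated $n$ times) and the single contraction $C=C_\Phi$. The plan is to handle them in the order (1), (2), (3), since (1) is just Cauchy--Schwarz plus the norm bound, (2) follows from (1) by polarization/density, and (3) follows from (1) by optimizing over $a,b$.

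For (1): start from the identity $\langle a, K_n(x,y) b\rangle_H = \langle C^n J_x a, C^n J_y b\rangle_M$, apply Cauchy--Schwarz in $M$ to get the bound $\|C^n J_x a\|_M \, \|C^n J_y b\|_M$, and then use $\|C^n\|\le \|C\|^n \le \|\Phi\|_{\mathrm{cb}}^{n/2}$ from \eqref{eq:B1} on each factor; the two square-root factors combine to give $\|\Phi\|_{\mathrm{cb}}^{n}$, yielding \eqref{eq:C5}. For (2): since $C^{*n}C^n$ is a positive operator with $\|C^{*n}C^n\| = \|C^n\|^2 \le \|\Phi\|_{\mathrm{cb}}^{n}\to 0$, it in fact converges to $0$ in operator norm, hence a fortiori strongly; alternatively one can note $\langle v, C^{*n}C^n v\rangle = \|C^n v\|^2 \le \|\Phi\|_{\mathrm{cb}}^n\|v\|^2\to 0$ for every $v\in M$. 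For (3): fix $x,y$ and recall $K_n(x,y) = J_x^* C^{*n}C^n J_y$, so $\|K_n(x,y)\|_{B(H)} \le \|C^n J_x\|\,\|C^n J_y\|$ where $\|C^n J_x\| \le \|C^n\|\,\|J_x\|$; it remains to identify $\|J_x\|^2 = \|J_x^* J_x\| = \|K(x,x)\|$, using $J_x^* J_x = V_x^* V_x = K(x,x)$ (from the Kolmogorov decomposition in Section \ref{sec:2}, since $J_x a = e_\emptyset\cdot\tilde K_{(x,a)}$ and $\langle J_x a, J_x b\rangle_M = \langle a, K(x,x)b\rangle_H$). Combining gives $\|K_n(x,y)\| \le \|\Phi\|_{\mathrm{cb}}^n \sqrt{\|K(x,x)\|\,\|K(y,y)\|}$, which is \eqref{eq:C6}.

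I do not anticipate a genuine obstacle here; the only point requiring a little care is making sure the square-root exponents bookkeep correctly — each application of \eqref{eq:B1} to $C^n = C_{s_n}\cdots C_{s_1}$ with all $s_j$ equal to the single symbol $s$ gives $\|C^n\|\le \|\Phi\|_{\mathrm{cb}}^{n/2}$, and since \eqref{eq:C5} and \eqref{eq:C6} each involve a product of two such factors, the halves add up to a full power $n$. One should also remark that the bound in (2) is actually norm convergence, which is the natural strengthening in the strictly contractive regime (contrast with the merely strong convergence of Theorem \ref{thm:3-1} in the unital case), though only the strong statement is asserted.
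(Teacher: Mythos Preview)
Your proposal is correct and follows essentially the same route as the paper: realization \eqref{eq:B3}, Cauchy--Schwarz, the bound $\|C^n\|\le\|\Phi\|_{\mathrm{cb}}^{n/2}$ from \eqref{eq:B1}, and the identification $\|J_x\|^2=\|K(x,x)\|$. Your remark that (2) is in fact norm convergence (not merely strong) is a valid sharpening that the paper does not state explicitly, though its proof implicitly contains it.
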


\begin{proof}
By \eqref{thm:B1}, for all $x,y,a,b$, 
\[
\left\langle a,K_{n}\left(x,y\right)b\right\rangle _{H}=\left\langle C^{n}J_{x}\left(a\right),C^{n}J_{y}\left(b\right)\right\rangle _{M}.
\]
Let $r:=\left\Vert C\right\Vert $. Part \eqref{enu:B1-2} of \eqref{thm:B1}
gives $r\leq\left\Vert \Phi\right\Vert ^{1/2}_{\mathrm{cb}}<1$. 

By Cauchy-Schwarz, 
\begin{align*}
\left|\left\langle a,K_{n}\left(x,y\right)b\right\rangle _{H}\right| & \leq\left\Vert C^{n}J_{x}\left(a\right)\right\Vert _{M}\left\Vert C^{n}J_{y}\left(b\right)\right\Vert _{M}\\
 & \leq\left\Vert C^{n}\right\Vert ^{2}_{M}\left\Vert J_{x}\left(a\right)\right\Vert _{M}\left\Vert J_{y}\left(b\right)\right\Vert _{M},
\end{align*}
where $\left\Vert C^{n}\right\Vert ^{2}_{M}\leq r^{2n}\leq\left\Vert \Phi\right\Vert ^{n}_{\mathrm{cb}}$.
This is \eqref{eq:C5}.

Since $r<1$, $\left\Vert C^{n}v\right\Vert _{M}\leq r^{n}\left\Vert v\right\Vert _{M}\rightarrow0$
for ever $v\in M$. Hence $C^{n}\rightarrow0$ strongly. Then for
any $v\in M$, 
\[
\left\Vert C^{n*}C^{n}v\right\Vert _{M}\leq r^{2n}\left\Vert v\right\Vert \rightarrow0,
\]
so $C^{*n}C^{n}\xrightarrow{s}0$. 

Finally, it follows from \eqref{eq:C5} that 
\begin{equation}
\left\Vert K_{n}\left(x,y\right)\right\Vert \leq\left\Vert \Phi\right\Vert ^{n}_{\mathrm{cb}}\left\Vert J_{x}\right\Vert _{H\rightarrow M}\left\Vert J_{y}\right\Vert _{H\rightarrow M}.\label{eq:C7}
\end{equation}
By the RKHS construction, 
\begin{align*}
\left\Vert J_{x}\right\Vert ^{2}_{H\rightarrow M} & =\sup_{\left\Vert a\right\Vert _{H}=1}\left\Vert J_{x}\left(a\right)\right\Vert ^{2}_{M}\\
 & =\sup_{\left\Vert a\right\Vert _{H}=1}\left\langle a,K\left(x,x\right)a\right\rangle _{H}=\left\Vert K\left(x,x\right)\right\Vert _{B\left(H\right)},
\end{align*}
and similarly $\left\Vert J_{y}\right\Vert ^{2}_{M}=\left\Vert K\left(y,y\right)\right\Vert _{B\left(H\right)}$.
Substituting this into \eqref{eq:C7} gives the stated bound in \eqref{eq:C6}. 
\end{proof}
\begin{cor}
\label{cor:3-3}Fix $S,S^{*}$ as in \prettyref{sec:2}, and let $\left\{ \Phi_{s}\right\} _{s\in S}$
be normal completely positive maps with 
\[
\sup_{s\in S}\left\Vert \Phi_{s}\right\Vert _{\mathrm{cb}}\leq L<1.
\]
For any $w=s_{1}\cdots s_{n}\in S^{*}$, set $K_{w}:=\Phi_{s_{1}}\circ\cdots\circ\Phi_{s_{n}}\left(K\right)$
and $C_{w}:=C_{s_{n}}\cdots C_{s_{1}}$. Then: 
\begin{enumerate}
\item $\left\Vert C_{w}\right\Vert _{M}\leq\prod^{n}_{j=1}\left\Vert \Phi_{s_{j}}\right\Vert ^{1/2}_{\mathrm{cb}}\leq L^{n/2}$. 
\item For all $x,y\in X$ and $a,b\in H$, 
\[
\left|\left\langle a,K_{w}\left(x,y\right)b\right\rangle _{H}\right|\leq L^{n}\left\Vert J_{x}\left(a\right)\right\Vert _{M}\left\Vert J_{y}\left(b\right)\right\Vert _{M}.
\]
\item Moreover, 
\[
\left\Vert K_{w}\left(x,y\right)\right\Vert \leq L^{\left|w\right|}\sqrt{\left\Vert K\left(x,x\right)\right\Vert \left\Vert K\left(y,y\right)\right\Vert }.
\]
\end{enumerate}
\end{cor}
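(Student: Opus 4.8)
The plan is to observe that \prettyref{cor:3-3} is essentially a corollary of \prettyref{cor:3-2} applied word-by-word, combined with the submultiplicativity of the operator norm; nothing new is needed beyond bookkeeping. Indeed, \textbf{part (1)} is exactly the bound \eqref{eq:B1} from \prettyref{thm:B1}\eqref{enu:B1-2}, and the additional observation is only that each factor $\left\Vert \Phi_{s_j}\right\Vert_{\mathrm{cb}}^{1/2}\le L^{1/2}$ by hypothesis, so the product is at most $L^{n/2}$. I would state this in one line.

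For \textbf{part (2)}, I would start from the realization \eqref{eq:B3} of \prettyref{thm:B1}, namely $\left\langle a,K_w(x,y)b\right\rangle_H=\left\langle C_wJ_x(a),C_wJ_y(b)\right\rangle_M$, apply Cauchy--Schwarz in $M$ to get the upper bound $\left\Vert C_wJ_x(a)\right\Vert_M\left\Vert C_wJ_y(b)\right\Vert_M$, then bound each factor by $\left\Vert C_w\right\Vert_M\left\Vert J_x(a)\right\Vert_M$ and $\left\Vert C_w\right\Vert_M\left\Vert J_y(b)\right\Vert_M$ respectively, so that the product contributes $\left\Vert C_w\right\Vert_M^2\le L^n$ by part (1). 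This is the verbatim analogue of the computation in the proof of \prettyref{cor:3-2}, with $C^n$ replaced by $C_w$ and $\left\Vert C\right\Vert^{2n}$ replaced by $\left\Vert C_w\right\Vert_M^2$.

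For \textbf{part (3)}, I would take the supremum of $\left|\left\langle a,K_w(x,y)b\right\rangle_H\right|$ over unit vectors $a,b\in H$; the left side becomes $\left\Vert K_w(x,y)\right\Vert_{B(H)}$ while the right side becomes $L^{|w|}\left\Vert J_x\right\Vert_{H\to M}\left\Vert J_y\right\Vert_{H\to M}$. Then I would reuse the identity already established in the proof of \prettyref{cor:3-2}, that $\left\Vert J_x\right\Vert_{H\to M}^2=\left\Vert K(x,x)\right\Vert_{B(H)}$ (which follows from $\left\Vert J_x(a)\right\Vert_M^2=\left\langle a,K(x,x)a\right\rangle_H$ via the RKHS construction), and likewise for $y$, yielding the stated geometric-mean bound with $|w|=n$ in place of the iteration count.

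There is no serious obstacle here: the only point requiring a moment's care is matching the word-length exponent, i.e.\ noting that $\left\Vert C_w\right\Vert_M^2\le\big(\prod_{j=1}^n\left\Vert\Phi_{s_j}\right\Vert_{\mathrm{cb}}^{1/2}\big)^2=\prod_{j=1}^n\left\Vert\Phi_{s_j}\right\Vert_{\mathrm{cb}}\le L^n=L^{|w|}$, rather than the cruder $L^{n/2}$ from part (1) squared-and-forgotten. Everything else is a transcription of the single-map argument, so I would keep the write-up short and explicitly point back to \prettyref{cor:3-2} for the repeated steps.
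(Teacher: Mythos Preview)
Your proposal is correct and matches the paper's approach exactly: the paper's proof is simply ``Immediate from the above discussion,'' and what you have written is precisely the unpacking of that remark, drawing on \prettyref{thm:B1}\eqref{enu:B1-2}--\eqref{enu:B1-3} and the computations already carried out in the proof of \prettyref{cor:3-2}. Your one caveat about the exponent is a non-issue (squaring $L^{n/2}$ gives $L^{n}$), so the write-up can indeed be kept to a sentence or two pointing back to those results.
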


\begin{proof}
Immediate from the above discussion. 
\end{proof}
We now refine the asymptotic picture from \prettyref{thm:3-1} by
separating the initial kernel $K$ into a $\Phi$-harmonic component
and a transient part. Writing $Q:=K-\Phi\left(K\right)$, the kernel
$Q$ measures the one-step “defect’’ of $K$ under $\Phi$. Iterating
this defect yields a canonical series that accounts for all non-harmonic
contributions of $K$ to the orbit $K_{n}=\Phi^{n}(K)$. The next
result shows that $K-\overline{K}$ is the monotone sum of these iterates,
that $\overline{K}$ is $\Phi$-harmonic, and that it is maximal among
$\Phi$-harmonic kernels dominated by $K$.

This discrete-time decomposition parallels the classical Stein (or
discrete Lyapunov) equation and its Neumann-series solution, standard
in operator and matrix theory (see e.g., \cite{MR2284176,MR3532794,MR1367089}).
Related notions of $\Phi$-harmonic elements and noncommutative Poisson
boundaries appear in e.g., \cite{MR482240,MR1916370}.
\begin{thm}[Stein decomposition]
\label{thm:3-4}Let $X$ be a set, $H$ a complex Hilbert space,
$K:X\times X\rightarrow B\left(H\right)$ a p.d. kernel, and $\Phi:B\left(H\right)\rightarrow B\left(H\right)$
a normal unital completely positive map. Set $K_{n}:=\Phi^{n}\left(K\right)$
and let $\overline{K}$ be the limit kernel given by \prettyref{thm:3-1},
so $K_{n}\left(x,y\right)\xrightarrow{w}\overline{K}\left(x,y\right)$
for each $x,y\in X$ (in the weak operator topology ). Define 
\[
Q\left(x,y\right):=K\left(x,y\right)-\Phi\left(K\right)\left(x,y\right).
\]
Then:
\begin{enumerate}
\item For all $x,y\in X$, 
\begin{equation}
K\left(x,y\right)-\overline{K}\left(x,y\right)=\sum^{\infty}_{j=0}\Phi^{j}\left(Q\right)\left(x,y\right),\label{eq:C8}
\end{equation}
where the series converges pointwise in the weak operator topology
and, moreover, in the p.d. order as a monotone increasing sequence
of kernels.
\item The limit kernel $\overline{K}$ is $\Phi$-harmonic, i.e., 
\begin{equation}
\Phi\left(\overline{K}\right)=\overline{K}.\label{eq:C9}
\end{equation}
\item If $L:X\times X\rightarrow B\left(H\right)$ is $\Phi$-harmonic and
$0\preceq L\preceq K$, then $L\preceq\overline{K}$. Equivalently,
$\overline{K}$ is the largest $\Phi$-harmonic kernel dominated by
$K$. 
\end{enumerate}
\end{thm}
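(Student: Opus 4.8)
The plan is to prove the three parts in order, each time passing through the model space $M$ of \prettyref{thm:B1} and translating the kernel statement into a Hilbert-space statement about the single contraction $C = C_\Phi$ and the projection $P = s\text{-}\lim C^{*n}C^n$ from \prettyref{thm:3-1}.

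\textbf{Part (1).} For a unital CP map $\Phi$, iterating the realization $K_n(x,y) = J_x^* C^{*n} C^n J_y$ gives a telescoping identity for the partial sums: $\sum_{j=0}^{N-1} \Phi^j(Q)(x,y) = \sum_{j=0}^{N-1}\bigl(\Phi^j(K) - \Phi^{j+1}(K)\bigr)(x,y) = K(x,y) - K_N(x,y) = J_x^*(I_M - C^{*N}C^N)J_y$. Since $C^{*N}C^N \downarrow P$ in the operator order (shown in Step~1 of the proof of \prettyref{thm:3-1}), the operators $I_M - C^{*N}C^N$ increase monotonically to $I_M - P$; sandwiching by $J_x, J_y$ and using that $J_x^*(\cdot)J_x$ is operator-monotone and that weak convergence of $T_N \uparrow T$ on a Hilbert space gives $J_x^* T_N J_y \to J_x^* T J_y$ weakly, I get the stated convergence $\sum_{j=0}^\infty \Phi^j(Q)(x,y) = K(x,y) - \overline K(x,y)$. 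The monotone-increasing-in-the-p.d.-order claim is exactly the statement that the partial-sum kernels $K - K_N$ form an increasing net, which follows from $C^{*N}C^N \downarrow$; positivity of each $\Phi^j(Q)$ as a kernel follows since $\Phi^j$ is CP and $Q = K - \Phi(K)$ is p.d. (its Gram matrices are $[K(x_i,x_j)] - [\Phi(K)(x_i,x_j)] = (I - C^*C)$-compressions, which are $\geq 0$ because $\|C\|\le 1$).

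\textbf{Part (2).} For $\Phi$-harmonicity of $\overline K$, I would work from $\overline K(x,y) = J_x^* P J_y$ and show $\Phi(\overline K)(x,y) = J_x^* C^* P C J_y$, then argue $C^* P C = P$. The key algebraic fact is $CP = PC$ on $\operatorname{ran} P = M_{\mathrm{iso}}$: indeed $C$ maps $M_{\mathrm{iso}}$ into itself isometrically (if $\|C^n v\| = \|v\|$ for all $n$ then the same holds for $Cv$), so $PC = CP C = C$ on $M_{\mathrm{iso}}$ and $PCP = CP$ globally; combined with $C^{*n}C^n \to P$ and $\|C\|\le 1$ one gets $C^* P C = C^*(s\text{-}\lim C^{*n}C^n) C = s\text{-}\lim C^{*(n+1)}C^{n+1} = P$. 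Then the realization $\Phi(\overline K)(x,y) = J_x^* C^* P C J_y = J_x^* P J_y = \overline K(x,y)$. I should double-check that the identity $\Phi(M)(x,y) = J_x^* C^* M C J_y$ holds for a general bounded $M$ that arises as a strong limit (not just $M = C^{*n}C^n$); this is where normality of $\Phi$ and strong continuity are used, and is the step I expect to need the most care.

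\textbf{Part (3).} For maximality, suppose $L$ is $\Phi$-harmonic with $0 \preceq L \preceq K$. Since $\Phi^n(L) = L$ and $\Phi^n(K) = K_n$, and $\Phi^n$ is CP hence kernel-order-preserving, we get $L = \Phi^n(L) \preceq \Phi^n(K) = K_n$ for every $n$. Passing to the weak-operator limit in $n$ (which exists and equals $\overline K$ by \prettyref{thm:3-1}) preserves the order on each fixed Gram matrix, so $L \preceq \overline K$. The only subtlety is that $L \preceq K$ as kernels must be propagated through $\Phi^n$: this is immediate because $\Phi^n$ applied blockwise, $\mathrm{id}_{M_m} \otimes \Phi^n$, preserves positivity of $[K(x_i,x_j)] - [L(x_i,x_j)] \geq 0$. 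I expect Part~(3) to be the shortest; the main obstacle in the whole proof is the commutation/limit interchange $C^* P C = P$ in Part~(2) and its kernel-level justification via normality of $\Phi$.
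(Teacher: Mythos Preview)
Your Parts~(1) and~(3) are correct. Part~(1) is the same telescoping argument the paper uses, just routed through the model; Part~(3) is actually cleaner than the paper's proof, which instead iterates the identity $K-L=\Phi(K-L)+Q$ to get $K-L=\Phi^{N}(K-L)+S_{N}\succeq S_{N}$ and then passes to the limit. Your direct observation $L=\Phi^{n}(L)\preceq\Phi^{n}(K)=K_{n}\to\overline{K}$ achieves the same conclusion in one line.

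Part~(2) is where you and the paper diverge, and where your flagged concern is a real issue. The paper never enters the model for this step: it applies $\Phi$ to the series identity of Part~(1), uses normality to push $\Phi$ through the sum, and telescopes to $K-\Phi(\overline{K})=K-\overline{K}$. Your route via $C^{*}PC=P$ is correct as an operator identity (the strong-limit argument you give is fine), but the bridge you need, namely
\[
\Phi\bigl(J_{x}^{*}\,T\,J_{y}\bigr)=J_{x}^{*}\,C^{*}TC\,J_{y}\quad\text{for }T=P,
\]
is \emph{not} a general functoriality principle: for an arbitrary bounded $T$ on $M$ this fails, because $P$ need not respect the $e_{\rho}$-grading while the left side only sees the $\emptyset$-layer. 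It does hold for the specific $T=P$, but only by the very limit you are trying to avoid: since $\Phi$ is normal and $\{K_{n}(x,y)\}$ is norm-bounded,
\[
\Phi\bigl(\overline{K}(x,y)\bigr)=\lim_{n}\Phi\bigl(K_{n}(x,y)\bigr)=\lim_{n}K_{n+1}(x,y)=\overline{K}(x,y).
\]
That line already proves Part~(2) outright, so the model-space detour through $C^{*}PC=P$ and the invariance of $M_{\mathrm{iso}}$, while correct, is not needed. In short: your instinct that normality is the crux is right, but use it to pass $\Phi$ through the limit $K_{n}\to\overline{K}$ directly (as the paper in effect does with the series), rather than searching for a general compression identity that does not exist.
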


\begin{proof}
Set $S_{N}:=\sum^{N-1}_{j=0}\Phi^{j}\left(Q\right)$ for $N>1$. Then
\[
K-\Phi^{N}\left(K\right)=\sum^{N-1}_{j=0}\left(\Phi^{j}\left(K\right)-\Phi^{j+1}\left(K\right)\right)=\sum^{N-1}_{j=0}\Phi^{j}\left(Q\right)=S_{N}.
\]
Each $\Phi^{j}\left(Q\right)$ is p.d., hence $S_{N}$ is p.d. and
$S_{N}\preceq S_{N+1}$. 

By \prettyref{thm:3-1}, $\Phi^{N}\left(K\right)=K_{N}\xrightarrow{w}\overline{K}$
pointwise in the weak operator topology, hence 
\[
S_{N}\left(x,y\right)=K\left(x,y\right)-\Phi^{N}\left(K\right)\left(x,y\right)\xrightarrow{w}K\left(x,y\right)-\overline{K}\left(x,y\right).
\]
Because $\left(S_{N}\right)_{N\geq1}$ is monotone increasing in the
p.d. order and is bounded above by $K$, the limit exists also in
the p.d. order and equals $K-\overline{K}$. This proves \eqref{eq:C8}.

Apply $\Phi$ to the identity \eqref{eq:C8}, so that
\[
\Phi\left(K\right)-\Phi\left(\overline{K}\right)=\sum^{\infty}_{j=1}\Phi^{j}\left(Q\right).
\]
Adding $Q=K-\Phi\left(K\right)$ to both sides gives 
\[
K-\Phi\left(\overline{K}\right)=Q+\sum^{\infty}_{j=1}\Phi^{j}\left(Q\right)=\sum^{\infty}_{j=0}\Phi^{j}\left(Q\right).
\]
This implies that $K-\Phi\left(\overline{K}\right)=K-\overline{K}$,
hence $\Phi\left(\overline{K}\right)=\overline{K}$, which is \eqref{eq:C9}.

Suppose $L$ is $\Phi$-harmonic and $0\preceq L\preceq K$. Then
$K-L$ is p.d., and complete positivity gives $\Phi^{N}\left(K-L\right)\succeq0$
for all $N$. Iterating the identity 
\[
K-L=\Phi\left(K-L\right)+Q
\]
gives 
\[
K-L=\Phi^{N}\left(K-L\right)+S_{N}.
\]
Since $\Phi^{N}\left(K-L\right)\succeq0$ we obtain the monotone lower
bound
\[
K-L\succeq S_{N},\quad\forall N.
\]
Taking $N\rightarrow\infty$ and using $S_{N}\xrightarrow{w}K-\overline{K}$
gives 
\[
K-L\succeq K-\overline{K},
\]
which is equivalent to $\overline{K}-L\succeq0$, i.e., $L\preceq\overline{K}$.
This proves the final part. 
\end{proof}

\section{Kernel Domination and RN Representation}\label{sec:4}

This section moves from the universal model back down to the base
RKHS of the original kernel. When each composing map is subunital,
every iterate sits below the starting kernel in the positive definite
order. On the Kolmogorov space this means there is a single positive
contraction that “explains” the iterate as a compression of the original
features. Concretely, we identify each composed kernel with a bounded
operator on the base space, so comparison, bounds, and limits reduce
to ordinary operator inequalities there. The picture is minimal, no
large ambient sum is needed, and it makes the domination $K_{w}\preceq K$
and the associated bounds more transparent.\\

Fix the data from the setting in \prettyref{sec:2}. Let $K:X\times X\rightarrow B\left(H\right)$
be p.d., and let $\mathcal{H}_{\tilde{K}}$ be the canonical Kolmogorov
space for $K$ with maps 
\[
V_{x}:H\rightarrow\mathcal{H}_{\tilde{K}},\qquad V_{x}a=\tilde{K}_{\left(x,a\right)},
\]
so that 
\[
K\left(x,y\right)=V^{*}_{x}V_{y}.
\]
Recall that $J_{x}:H\rightarrow M$ is given by $J_{x}\left(a\right)=e_{\emptyset}\cdot\tilde{K}_{\left(x,a\right)}$.
That is, we identify $\mathcal{H}_{\tilde{K}}$ with the $\emptyset$-summand
of $M$ via the isometric embedding 
\[
\iota:\mathcal{H}_{\tilde{K}}\rightarrow M,\qquad\iota\left(f\right)=e_{\emptyset}\cdot f,
\]
so that $J_{x}=\iota\circ V_{x}$. 

We consider normal completely positive maps $\left\{ \Phi_{s}\right\} _{s\in S}$
with Kraus forms as in \prettyref{sec:2}, and the iterated kernels
\[
K_{w}:=\Phi_{s_{1}}\circ\cdots\circ\Phi_{s_{n}}\left(K\right),\qquad w=s_{1}\cdots s_{n}\in S^{*},
\]
with $K_{\emptyset}=K$.
\begin{lem}[Radon-Nikodym for p.d. kernels]
\label{lem:D1}Let $K,L:X\times X\rightarrow B\left(H\right)$ be
p.d. kernels with $L\preceq K$ (i.e., $K-L$ is p.d.) Let $\mathcal{H}_{\tilde{K}}$
be the canonical Kolmogorov space of $K$ with maps $V_{x}:H\rightarrow\mathcal{H}_{\tilde{K}}$
so that $K\left(x,y\right)=V^{*}_{x}V_{y}$. Then there exists a unique
operator $A\in B\left(\mathcal{H}_{\tilde{K}}\right)$ with $0\leq A\leq I_{\mathcal{H}_{\tilde{K}}}$
such that 
\[
L\left(x,y\right)=V^{*}_{x}AV_{y},\qquad x,y\in X.
\]
\end{lem}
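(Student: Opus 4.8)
The plan is to run the standard RKHS Radon–Nikodym argument at the level of the scalar lift. First I would pass from $K$ and $L$ to their scalar kernels $\tilde K$ and $\tilde L$ on $X\times H$, defined by $\tilde K((x,a),(y,b))=\langle a,K(x,y)b\rangle_H$ and similarly for $\tilde L$. Since $K-L$ is a p.d. $B(H)$-valued kernel, the computation already carried out in Section~\ref{sec:2} (the scalar ``trick'') shows $\tilde K-\tilde L=\widetilde{K-L}$ is a p.d. scalar kernel, i.e. $\tilde L\preceq\tilde K$ as scalar kernels. Now I would invoke the classical fact that if two scalar p.d. kernels satisfy $\tilde L\preceq\tilde K$, then $\mathcal H_{\tilde L}$ sits contractively inside $\mathcal H_{\tilde K}$: the map $\tilde K_{(x,a)}\mapsto \tilde L_{(x,a)}$ extends to a bounded operator, and more precisely there is a unique $A\in B(\mathcal H_{\tilde K})$ with $0\le A\le I$ such that $\langle \tilde K_{(x,a)},A\,\tilde K_{(y,b)}\rangle_{\mathcal H_{\tilde K}}=\tilde L((x,a),(y,b))$. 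This is the Aronszajn/Parthasarathy–Schmidt domination theorem for RKHSs; I would either cite it (it is essentially \cite{MR2938971,MR4250453}) or reprove it in two lines by defining the bounded positive form $q(F):=\langle F,F\rangle$ computed via the $\tilde L$-pairing on the dense span, checking $0\le q\le \|\cdot\|^2_{\mathcal H_{\tilde K}}$, and representing it by $A$.

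Next I would translate this back to the operator-valued statement. Recall $V_x:H\to\mathcal H_{\tilde K}$ is $V_x a=\tilde K_{(x,a)}$, so $\langle a, V_x^*A V_y b\rangle_H = \langle V_x a, A V_y b\rangle_{\mathcal H_{\tilde K}} = \langle \tilde K_{(x,a)}, A\,\tilde K_{(y,b)}\rangle_{\mathcal H_{\tilde K}} = \tilde L((x,a),(y,b)) = \langle a, L(x,y) b\rangle_H$ for all $a,b\in H$, hence $L(x,y)=V_x^*A V_y$. The operator $A$ produced this way automatically satisfies $0\le A\le I_{\mathcal H_{\tilde K}}$: positivity of $A$ follows because $\tilde L$ is p.d., and $A\le I$ follows because $K-L$ is p.d., so $\langle F,(I-A)F\rangle\ge 0$ on the dense span $\{\tilde K_{(x,a)}\}$ and hence everywhere.

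For uniqueness, suppose $A'$ is another operator with $L(x,y)=V_x^*A'V_y$ for all $x,y$. Then $\langle V_x a,(A-A')V_y b\rangle_{\mathcal H_{\tilde K}}=0$ for all $x,y\in X$ and $a,b\in H$, i.e. $A-A'$ vanishes on all pairs of kernel sections $\tilde K_{(x,a)}$; since these span a dense subspace of $\mathcal H_{\tilde K}$, we get $A=A'$.

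The only mild obstacle is justifying the domination-of-RKHS step in the precise form ``there exists $A$ with $0\le A\le I$'', rather than merely a bounded contractive embedding; this is routine once one works with the positive form $q$ on the dense span and quotes the bounded-form representation theorem, but it is worth stating carefully because the two bounds $A\ge 0$ and $A\le I$ come from the two p.d. hypotheses ($L\succeq 0$ and $K-L\succeq 0$) respectively. Everything else is a direct unwinding of the Kolmogorov identification $K(x,y)=V_x^*V_y$.
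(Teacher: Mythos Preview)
Your argument is correct and is essentially the paper's own proof sketch: you define the bounded positive sesquilinear form $\langle V_x a, V_y b\rangle_L = \tilde L((x,a),(y,b))$ on the dense span, bound it above by the $\mathcal H_{\tilde K}$ inner product using $L\preceq K$, represent it by a positive contraction $A$, and conclude uniqueness from density of the kernel sections. The only cosmetic difference is that you make the passage to the scalar lift $\tilde L$ explicit, whereas the paper works directly with the form on $\{V_y b\}$; these are the same computation.
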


\begin{proof}[Proof sketch]
On the algebraic span of $\left\{ V_{y}b\right\} $, define 
\[
\left\langle V_{x}a,V_{y}b\right\rangle _{L}:=\left\langle a,L\left(x,y\right)b\right\rangle _{H}.
\]
Since $L\preceq K$, Cauchy-Schwarz gives 
\[
\left|\left\langle \xi,\eta\right\rangle _{L}\right|\leq\left\Vert \xi\right\Vert _{\mathcal{H}_{\tilde{K}}}\left\Vert \eta\right\Vert _{\mathcal{H}_{\tilde{K}}},
\]
so this form is bounded by the $\mathcal{H}_{\tilde{K}}$ norm and
extends uniquely to a bounded positive operator $A$ with $0\leq A\leq I$
via $\left\langle \xi,\eta\right\rangle _{L}=\left\langle \xi,A\eta\right\rangle _{\mathcal{H}_{\tilde{K}}}$.
Uniqueness follows from the minimality of $\mathcal{H}_{\tilde{K}}$
(the span of $\left\{ V_{y}b\right\} $ is dense). 

For full details and variants, see e.g., \cite{MR2938971,MR1976867,MR247483,MR4250453}.
\end{proof}
\begin{rem*}
The operator $A$ in \prettyref{lem:D1} is called the Radon-Nikodym
derivative of $L$ with respect to $K$ (on the Kolmogorov space $\mathcal{H}_{\tilde{K}}$),
denoted by $A=dL/dK$. 
\end{rem*}
\begin{thm}[RN representation under subunitality]
\label{thm:4-2}Assume each $\Phi_{s}$ is subunital, i.e., $\Phi_{s}\left(I\right)\leq I$.
Then for every $w\in S^{*}$, there exists a unique bounded positive
operator $A_{w}\in B\left(\mathcal{H}_{\tilde{K}}\right)$ with $0\leq A_{w}\leq I_{\mathcal{H}_{\tilde{K}}}$
such that 
\[
K_{w}\left(x,y\right)=V^{*}_{x}A_{w}V_{y},\qquad x,y\in X.
\]
Equivalently, for all $x,y\in X$ and $a,b\in H$, 
\[
\left\langle a,K_{w}\left(x,y\right)b\right\rangle _{H}=\left\langle V_{x}a,A_{w}V_{y}b\right\rangle _{\mathcal{H}_{\tilde{K}}}.
\]
Moreover, with $C_{w}:=C_{s_{n}}\cdots C_{s_{1}}$ from \eqref{eq:B1-1},
one has the explicit formula
\[
A_{w}=\iota^{*}C^{*}_{w}C_{w}\iota,
\]
and hence the kernel domination $K_{w}\preceq K$ (in the usual p.d.
order) holds for all $w\in S^{*}$. 
\end{thm}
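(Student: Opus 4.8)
The plan is to derive everything from the realization identity \eqref{eq:B2} of \prettyref{thm:B1} together with the Radon–Nikodym Lemma \ref{lem:D1}. First I would observe that subunitality of each $\Phi_s$ is exactly the hypothesis needed to upgrade the norm bound $\|C_s\|\le\|\Phi_s\|^{1/2}_{\mathrm{cb}}$ to the operator inequality $C_s^*C_s\le I_M$: running the computation in the proof of part \eqref{enu:B1-2} of \prettyref{thm:B1} with $\Phi_s(I)\le I$ in place of the cb-norm estimate gives $\|C_s u\|_M^2=\sum_i\langle\alpha_i,[\Phi_s(K(x_{ij},x_{ik}))]\alpha_i\rangle$ and, since $[\Phi_s(K(x_{ij},x_{ik}))]\le[\Phi_s(\|K\|\cdot I)]\le\|[K(x_{ij},x_{ik})]\|\cdot I$ is not quite what I want — instead I would use that $T\mapsto\Phi_s(T)$ is CP and $2$-positive so that $0\le T\le\|T\|I$ plus $\Phi_s(I)\le I$ gives the block inequality $[\Phi_s(T_{jk})]\le[T_{jk}]$ whenever $[T_{jk}]\ge0$ (a standard consequence of $n$-positivity and $\Phi_s(I)\le I$). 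Hence $\|C_su\|_M\le\|u\|_M$, i.e. $C_s$ is a contraction, and by submultiplicativity $C_w$ is a contraction, so $C_w^*C_w\le I_M$.

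Next I would push this through $\iota$. Since $J_x=\iota\circ V_x$ with $\iota:\mathcal H_{\tilde K}\to M$ the isometric embedding onto the $\emptyset$-summand, the realization \eqref{eq:B2} reads
\[
K_w(x,y)=J_x^*C_w^*C_wJ_y=V_x^*\bigl(\iota^*C_w^*C_w\iota\bigr)V_y.
\]
Define $A_w:=\iota^*C_w^*C_w\iota\in B(\mathcal H_{\tilde K})$. Then $A_w\ge0$ because $C_w^*C_w\ge0$ and conjugation by $\iota$ preserves positivity, and $A_w\le\iota^*I_M\iota=\iota^*\iota=I_{\mathcal H_{\tilde K}}$ because $C_w^*C_w\le I_M$ and $\iota$ is isometric. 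This gives both the explicit formula $A_w=\iota^*C_w^*C_w\iota$ and the bound $0\le A_w\le I_{\mathcal H_{\tilde K}}$, and the displayed identities $K_w(x,y)=V_x^*A_wV_y$, equivalently $\langle a,K_w(x,y)b\rangle_H=\langle V_xa,A_wV_yb\rangle_{\mathcal H_{\tilde K}}$, are then immediate.

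For uniqueness I would invoke \prettyref{lem:D1}: since $0\le A_w\le I_{\mathcal H_{\tilde K}}$ and $K_w(x,y)=V_x^*A_wV_y$, the kernel $K_w$ satisfies $0\preceq K_w$ and, writing $K_w(x,y)=V_x^*A_wV_y\preceq V_x^*V_y=K(x,y)$ (using $A_w\le I$ together with the standard fact that $0\le A\le I$ implies $[V_{x_i}^*AV_{x_j}]\le[V_{x_i}^*V_{x_j}]$ for any finite family, proved by inserting $A$ into the Gram form), we get $K_w\preceq K$. Then \prettyref{lem:D1} applied to the pair $K_w\preceq K$ produces a unique operator in $B(\mathcal H_{\tilde K})$ with values in $[0,I]$ representing $K_w$ as $V_x^*(\cdot)V_y$, and since $A_w$ is one such operator it is \emph{the} Radon–Nikodym derivative $dK_w/dK$; in particular $A_w$ is unique. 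The kernel domination $K_w\preceq K$ has already been established along the way. The main obstacle is the first step — verifying that subunitality (rather than unitality or a cb-norm bound) yields the block inequality $[\Phi_s(T_{jk})]\le[T_{jk}]$ for positive blocks, hence the operator contraction $C_s^*C_s\le I_M$; everything afterward is bookkeeping with $\iota$ and a citation of \prettyref{lem:D1}.
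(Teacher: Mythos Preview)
Your overall architecture matches the paper's proof: define $A_w=\iota^*C_w^*C_w\iota$, read off $K_w(x,y)=V_x^*A_wV_y$ from the realization \eqref{eq:B2} via $J_x=\iota V_x$, get $0\le A_w\le I$ from $C_w^*C_w\le I_M$, and deduce domination and uniqueness. The paper does exactly this, with uniqueness argued directly from density of $\{V_yb\}$ rather than by citing \prettyref{lem:D1}, which is cosmetically different but equivalent.

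The genuine gap is your first step. The block inequality you assert, $[\Phi_s(T_{jk})]\le[T_{jk}]$ for every positive block $[T_{jk}]$, is \emph{false} for subunital (even unital) CP maps. Take $H=\mathbb{C}^2$ and $\Phi(T)=U^*TU$ for a nontrivial unitary $U$; this is unital CP, yet $U^*TU\le T$ fails for generic $T\ge0$. More concretely, $\Phi(T)=E_{21}TE_{12}=T_{11}E_{22}$ is subunital CP with $\Phi(E_{11})=E_{22}\not\le E_{11}$. So ``$n$-positivity plus $\Phi_s(I)\le I$'' does not give the operator inequality you claim, and your route to $\|C_s\|\le1$ does not work as written. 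The fix is much simpler than what you attempted and is exactly what the paper uses: for a normal CP map one has $\|\Phi_s\|_{\mathrm{cb}}=\|\Phi_s(I)\|$, so subunitality gives $\|\Phi_s\|_{\mathrm{cb}}\le1$, and then part~\eqref{enu:B1-2} of \prettyref{thm:B1} already yields $\|C_s\|\le\|\Phi_s\|_{\mathrm{cb}}^{1/2}\le1$, i.e.\ $C_s^*C_s\le I_M$. There is nothing to ``upgrade''; the cb-norm bound is the contraction bound once you observe $\|\Phi_s\|_{\mathrm{cb}}\le1$. With this correction your proof is complete and coincides with the paper's.
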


\begin{proof}
Since each $\Phi_{s}$ is subunital, part \eqref{enu:B1-2} of \prettyref{thm:B1}
gives $\left\Vert C_{s}\right\Vert \leq1$. Hence every $C_{w}$ is
a contraction and $0\leq C^{*}_{w}C_{w}\leq I_{M}$. Thus, 
\[
A_{w}:=\iota^{*}C^{*}_{w}C_{w}\iota\in B(\mathcal{H}_{\tilde{K}})
\]
is a positive contraction, $0\leq A_{w}\leq\iota^{*}I_{M}\iota=I_{\mathcal{H}_{\tilde{K}}}$. 

For $x,y\in X$ and $a,b\in H$, 
\begin{align*}
\left\langle a,K_{w}\left(x,y\right)b\right\rangle _{H} & =\left\langle C_{w}J_{x}\left(a\right),C_{w}J_{y}\left(b\right)\right\rangle _{M}\\
 & =\left\langle C_{w}\iota V_{x}a,C_{w}\iota V_{y}b\right\rangle _{M}\\
 & =\left\langle V_{x}a,\iota^{*}C^{*}_{w}C_{w}\iota V_{y}b\right\rangle _{\mathcal{H}_{\tilde{K}}}.
\end{align*}
This proves the representation $K_{w}\left(x,y\right)=V^{*}_{x}A_{w}V_{y}$
and shows that $0\leq A_{w}\leq I$. Kernel domination follows immediately:
\[
\left\langle a,\left(K-K_{w}\right)\left(x,y\right)b\right\rangle _{H}=\left\langle V_{x}a,\left(I-A_{w}\right)V_{y}b\right\rangle \geq0,
\]
so $K_{w}\preceq K$. 

Uniqueness: by minimality of the Kolmogorov decomposition, $\mathcal{H}_{\tilde{K}}$
is the closed linear span of $\left\{ V_{y}b:y\in X,b\in H\right\} $.
If $B$ is another bounded operator on $\mathcal{H}_{\tilde{K}}$
with
\[
V^{*}_{x}BV_{y}=V^{*}_{x}A_{w}V_{y}
\]
for all $x,y\in X$, then 
\[
\left\langle V_{x}a,\left(B-A_{w}\right)V_{y}b\right\rangle =0
\]
for all $x,y,a,b$, hence $B=A_{w}$. 
\end{proof}
\begin{rem}[Relation to the $M$-model]
 The operator $A_{w}$ is exactly the compression of $C^{*}_{w}C_{w}$
to the $\emptyset$-layer of $M$. Thus the RN picture on $\mathcal{H}_{\tilde{K}}$
is a compressed form of the universal model from \prettyref{thm:B1}.
In particular, 
\[
\left\Vert A_{w}\right\Vert \leq\left\Vert C_{w}\right\Vert ^{2}\leq\prod^{n}_{j=1}\left\Vert \Phi_{s_{j}}\right\Vert _{\mathrm{cb}},
\]
and if $\sup_{s}\left\Vert \Phi_{s}\right\Vert _{\mathrm{cb}}\leq L<1$
then $\left\Vert A_{w}\right\Vert \leq L^{\left|w\right|}$. 
\end{rem}

\begin{rem}
If some $\Phi_{s}$ is not subunital, the contraction property of
$C_{s}$ can fail, and so can the kernel domination $K_{w}\preceq K$.
In that case, there need not exist a global positive contraction $0\leq A_{w}\leq I$
on $\mathcal{H}_{\tilde{K}}$ with $K_{w}\left(x,y\right)=V^{*}_{x}A_{w}V_{y}$,
but the universal model still exists (\prettyref{thm:B1}). 
\end{rem}

\prettyref{lem:D1} established the RN representation for positive
definite kernels: if $L\preceq K$, then there exists a unique $0\le A\le I_{\mathcal{H}_{\tilde{K}}}$
such that $L(x,y)=V^{*}_{x}AV_{y}$. This operator $A$ acts on the
canonical Kolmogorov space of $K$ and compresses $K$ to $L$. The
same argument extends to the iterated kernels generated by the family
$\left\{ \Phi_{s}\right\} _{s\in S}$. Through the universal model
constructed in \prettyref{sec:2}, the Radon-Nikodym operator $A$
acts diagonally across the graded space $M$, producing a single compression
that intertwines all iterates. The following result makes this extension
precise.
\begin{thm}[order-monotone compression across models]
Let $K^{\left(1\right)},K^{\left(2\right)}:X\times X\rightarrow B\left(H\right)$
be p.d. kernels with $K^{\left(1\right)}\preceq K^{\left(2\right)}$.
Fix the same CP family $\left\{ \Phi_{s}\right\} _{s\in S}$ and Kraus
data as in \prettyref{sec:2}, and form the universal models 
\[
\left(M_{i},\{C^{\left(i\right)}_{s}\}_{s\in S},\{J^{\left(i\right)}_{x}\}_{x\in X}\right),\quad i=1,2,
\]
for $\left(K^{\left(i\right)},\{\Phi_{s}\}\right)$ via \prettyref{thm:B1}. 

Then there exists a unique positive contraction $A\in B\left(\mathcal{H}_{\tilde{K}_{2}}\right)$
(i.e., $A=d\tilde{K}^{\left(1\right)}/d\tilde{K}^{\left(2\right)}$)
such that, for every word $w\in S^{*}$ and all $x,y\in X$, 
\begin{equation}
K^{\left(1\right)}_{w}\left(x,y\right)=J^{\left(2\right)*}_{x}C^{\left(2\right)*}_{w}\left(I_{l^{2}\left(R\right)}\otimes A\right)C^{\left(2\right)}_{w}J^{\left(2\right)}_{y}.\label{eq:d1}
\end{equation}
\end{thm}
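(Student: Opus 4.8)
The plan is to produce the operator $A$ from the kernel Radon--Nikodym lemma applied to $K^{(1)}\preceq K^{(2)}$, promote it to the graded space $M_2$ by tensoring with the identity on $\ell^2(R)$, and then check \eqref{eq:d1} by rerunning the telescoping computation from the proof of \prettyref{thm:B1} inside $M_2$ with this $A$ inserted. Note that the two models share the same index set $R$ (it depends only on the common Kraus data $\{A_{s,r}\}$), so $I_{\ell^2(R)}\otimes A$ makes sense on $M_2=\ell^2(R)\otimes\mathcal H_{\tilde K_2}$; also $M_1$ never enters the final formula --- only $M_2$ and the single operator $A$ do.

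First I would obtain $A$. Since $K^{(1)}\preceq K^{(2)}$, \prettyref{lem:D1} applied with $K=K^{(2)}$, $L=K^{(1)}$ gives a unique $A\in B(\mathcal H_{\tilde K_2})$ with $0\le A\le I$ and $K^{(1)}(x,y)=V_x^{(2)*}AV_y^{(2)}$; equivalently
\[
\langle \tilde K^{(2)}_{(x,c)},\,A\,\tilde K^{(2)}_{(y,d)}\rangle_{\mathcal H_{\tilde K_2}}=\langle c,\,K^{(1)}(x,y)\,d\rangle_H,\qquad c,d\in H .
\]
Because the Kolmogorov space of the scalar lift $\tilde K^{(2)}$ on $X\times H$ is exactly $\mathcal H_{\tilde K_2}$ with sections $\tilde K^{(2)}_{(x,a)}=V_x^{(2)}a$, this $A$ is precisely $d\tilde K^{(1)}/d\tilde K^{(2)}$ in the sense of the remark following \prettyref{lem:D1}. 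Set $\tilde A:=I_{\ell^2(R)}\otimes A\in B(M_2)$, again a positive contraction.

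Next I would verify \eqref{eq:d1}. Fix $w=s_1\cdots s_n\in S^*$. From the proof of \prettyref{thm:B1} applied to the $K^{(2)}$-model,
\[
C^{(2)}_w J^{(2)}_x(a)=\sum_{\rho=(r_1,\dots,r_n)}e_{(r_n,\dots,r_1)}\cdot \tilde K^{(2)}_{(x,\,B_\rho a)},\qquad B_\rho:=A_{s_n,r_n}\cdots A_{s_1,r_1},
\]
the sum converging in $M_2$. Applying the bounded operator $\tilde A$, diagonalizing the $M_2$-inner product via orthogonality of the basis vectors $e_{(r_n,\dots,r_1)}$, and then using the displayed RN relation to replace each $K^{(2)}$-pairing by a $K^{(1)}$-pairing, one arrives at
\[
\big\langle C^{(2)}_w J^{(2)}_x(a),\ \tilde A\,C^{(2)}_w J^{(2)}_y(b)\big\rangle_{M_2}=\sum_{\rho}\big\langle B_\rho a,\ K^{(1)}(x,y)\,B_\rho b\big\rangle_H=\sum_\rho\big\langle a,\ B_\rho^*K^{(1)}(x,y)B_\rho\,b\big\rangle_H,
\]
the sums converging in the strong operator topology exactly as in \prettyref{thm:B1}. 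The operator sum $\sum_\rho B_\rho^*K^{(1)}(x,y)B_\rho$ telescopes through the Kraus families of $\Phi_{s_n},\dots,\Phi_{s_1}$ to $(\Phi_{s_1}\circ\cdots\circ\Phi_{s_n})(K^{(1)}(x,y))=K^{(1)}_w(x,y)$; since the left-hand side of the display equals $\langle a,\ J^{(2)*}_x C^{(2)*}_w\tilde A C^{(2)}_w J^{(2)}_y\, b\rangle_H$, identity \eqref{eq:d1} follows.

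For uniqueness I would take $w=\emptyset$ in \eqref{eq:d1}: with $C^{(2)}_\emptyset=I_{M_2}$, $J^{(2)}_x=\iota\circ V^{(2)}_x$, and $\iota^*\tilde A\iota=A$ (the compression of $\tilde A$ to the $\emptyset$-layer), the formula reduces to $K^{(1)}(x,y)=V^{(2)*}_x A V^{(2)}_y$, which determines $A$ since $\mathrm{span}\{V^{(2)}_y b\}$ is dense in $\mathcal H_{\tilde K_2}$; hence any $A'$ obeying \eqref{eq:d1} for all $w$ equals $A$. The one step calling for some care is the middle computation: matching the reversed index strings that label the $\ell^2(R)$-basis, checking that inserting $\tilde A$ does not disturb the diagonalization, and confirming that the telescoping --- which in \prettyref{thm:B1} turned $K$ into $K_w$ --- now produces $K^{(1)}_w$ (not $K^{(2)}_w$) once the pairings have been rewritten via the RN relation. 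Boundedness of $\tilde A$, convergence of the Kraus sums applied to the bounded operator $K^{(1)}(x,y)$, and the Hermitian symmetry $K^{(1)}(y,x)=K^{(1)}(x,y)^*$ used in rewriting inner products are all inherited from \prettyref{sec:2}.
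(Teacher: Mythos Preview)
Your proposal is correct and follows essentially the same route as the paper: obtain $A$ from \prettyref{lem:D1} as the RN derivative $d\tilde K^{(1)}/d\tilde K^{(2)}$, amplify to $I_{\ell^2(R)}\otimes A$ on $M_2$, expand $C^{(2)}_wJ^{(2)}_x a$ as in the proof of \prettyref{thm:B1}, use the RN identity to convert each $\tilde K^{(2)}$-pairing into a $K^{(1)}$-pairing, and telescope through the Kraus sums to reach $K^{(1)}_w$. Your uniqueness argument via $w=\emptyset$ is the same as the paper's appeal to uniqueness of the RN derivative, just made explicit.
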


\begin{proof}
Consider the scalar lifts 
\[
\tilde{K}^{\left(i\right)}\left(\left(x,a\right),\left(y,b\right)\right):=\langle a,K^{\left(i\right)}\left(x,y\right)b\rangle_{H}
\]
on $X\times H$ and their RKHSs $\mathcal{H}_{\tilde{K}^{\left(i\right)}}$
with kernel sections $\tilde{K}^{\left(i\right)}_{\left(x,a\right)}$.
Since $K^{\left(1\right)}\preceq K^{\left(2\right)}$, we have $\tilde{K}^{\left(1\right)}\preceq\tilde{K}^{\left(2\right)}$
as scalar kernels. By \prettyref{lem:D1}, there exists a unique RN
derivative $A=d\tilde{K}^{\left(1\right)}/d\tilde{K}^{\left(2\right)}$.
In particular, $A\in B\left(\mathcal{H}_{\tilde{K}_{2}}\right)$ with
$0\leq A\leq I$. 

Both models $M_{i}$ are constructed as 
\[
M_{i}=l^{2}\left(R\right)\otimes\mathcal{H}_{\tilde{K}^{\left(i\right)}}\simeq\bigoplus_{\rho\in R}\mathcal{H}_{\tilde{K}^{\left(i\right)}},
\]
with the same grading set $R$ (built only from $S$ and the Kraus
index sets). Write elements of $M_{i}$ as $\sum_{\rho}e_{\rho}\cdot F^{\left(i\right)}_{\rho}$,
where $F^{\left(i\right)}_{\rho}\in\mathcal{H}_{\tilde{K}^{\left(i\right)}}$
and $\left\{ e_{\rho}\right\} $ is the canonical ONB of $l^{2}\left(R\right)$. 

By \prettyref{thm:B1}, for each $i=1,2$, 
\[
K^{\left(i\right)}_{w}\left(x,y\right)=J^{\left(i\right)*}_{x}C^{\left(i\right)*}_{w}C^{\left(i\right)}_{w}J^{\left(i\right)}_{y}.
\]
Fix $w=s_{1}\cdots s_{n}\in S^{*}$. Expanding by the definition of
$C^{\left(2\right)}_{w}$ and the orthogonality of the $R$-summands,
\[
C^{\left(2\right)}_{w}J^{\left(2\right)}_{x}a=\sum_{\left(r_{1},\dots,r_{n}\right)}e_{\left(r_{1},\dots,r_{n}\right)}\cdot\tilde{K}^{\left(2\right)}_{\left(x,A_{s_{n},r_{n}}\cdots A_{s_{1},r_{1}}a\right).}
\]
Therefore, 
\begin{eqnarray*}
 &  & \left\langle C^{\left(2\right)}_{w}J^{\left(2\right)}_{x}a,\left(I\otimes A\right)C^{\left(2\right)}_{w}J^{\left(2\right)}_{y}b\right\rangle _{M_{2}}\\
 & = & \sum_{\left(r_{1},\dots,r_{n}\right)}\left\langle \tilde{K}^{\left(2\right)}_{\left(x,A_{s_{n},r_{n}}\cdots A_{s_{1},r_{1}}a\right)},A\tilde{K}^{\left(2\right)}_{\left(y,A_{s_{n},r_{n}}\cdots A_{s_{1},r_{1}}b\right)}\right\rangle _{\mathcal{H}_{\tilde{K}^{\left(2\right)}}}\\
 & = & \sum_{\left(r_{1},\dots,r_{n}\right)}\left\langle \tilde{K}^{\left(1\right)}_{\left(x,A_{s_{n},r_{n}}\cdots A_{s_{1},r_{1}}a\right)},\tilde{K}^{\left(1\right)}_{\left(y,A_{s_{n},r_{n}}\cdots A_{s_{1},r_{1}}b\right)}\right\rangle _{\mathcal{H}_{\tilde{K}^{\left(1\right)}}}\\
 & = & \left\langle a,\left(\Phi_{s_{1}}\circ\cdots\circ\Phi_{s_{n}}\right)\left(K^{\left(1\right)}\left(x,y\right)\right)b\right\rangle _{H}\\
 & = & \left\langle a,K^{\left(1\right)}_{w}\left(x,y\right)b\right\rangle _{H}
\end{eqnarray*}
and so \eqref{eq:d1} follows. Uniqueness is from that of RN derivative
on $\mathcal{H}_{\tilde{K}^{\left(2\right)}}$. 
\end{proof}

\section{Random Iterations}

This section lets the composing map vary randomly at each step and
asks for the typical exponential growth (or decay) rate of the pushed-forward
features. On the universal model from \prettyref{sec:2}, a random
word becomes a random product of the creation operators, subadditivity
then forces an almost-sure limit for the average log-norm (the Lyapunov
exponent). Translating back to kernels, this single number controls
how fast the composed kernels amplify or contract on any fixed pair
of points, and it yields clean almost-sure bounds for their entries
and operator norms. In the deterministic unital case the exponent
collapses to zero, matching the non-decaying limit kernel identified
in \prettyref{sec:3}.\\

Fix the data from the setting in \prettyref{sec:2}. In particular,
for each $z\in S$ we have the bounded operator $C_{z}:M\rightarrow M$
and, for a word $w=z_{1}\cdots z_{n}\in S^{*}$, the product $C_{w}:=C_{z_{n}}\cdots C_{z_{1}}$
and the kernel realization 
\[
K_{w}\left(x,y\right)=J^{*}_{x}C^{*}_{w}C_{w}J_{y}.
\]

Let $\mu$ be a probability measure on $S$. Work on the canonical
path space $\left(\Omega,\mathcal{F},\mathbb{P}\right)$, where 
\[
\Omega:=S^{\mathbb{N}},\quad\mathbb{P}:=\mu^{\otimes\mathbb{N}}
\]
with the product $\sigma$-algebra $\mathcal{F}$. Let $z_{n}:\Omega\rightarrow S$
be the coordinate maps, $z_{n}\left(\omega\right)=\omega_{n}$ (i.i.d.
with law $\mu$) For $\omega\in\Omega$, set the random word of length
$n$ 
\[
\xi_{n}\left(\omega\right):=z_{1}\left(\omega\right)\cdots z_{n}\left(\omega\right),
\]
and 
\[
C_{\xi_{n}\left(\omega\right)}=C_{z_{n}\left(\omega\right)}\cdots C_{z_{1}\left(\omega\right)}.
\]
We assume the integrability condition
\begin{equation}
\mathbb{E}\left[\log^{+}\left\Vert C_{z_{1}}\right\Vert \right]<\infty\label{eq:5-1}
\end{equation}
where $\log^{+}t=\max\left\{ 0,\log t\right\} $. 

Define the left shift $\theta:\Omega\rightarrow\Omega$, 
\[
\theta\left(\omega_{1},\omega_{2},\omega_{3},\dots\right)=\left(\omega_{2},\omega_{3},\dots\right).
\]
Then $\theta$ is measure-preserving and $z_{n+1}=z_{n}\circ\theta$. 
\begin{thm}
\label{thm:5-1}Under \eqref{eq:5-1}, there exists a deterministic
constant $\lambda\in[-\infty,\infty)$ such that 
\[
\lim_{n\rightarrow\infty}\frac{1}{n}\log\left\Vert C_{\xi_{n}\left(\omega\right)}\right\Vert =\lambda\qquad\mathbb{P}\text{-a.e. }\omega.
\]
Moreover, 
\[
\lambda=\inf_{n\geq1}\frac{1}{n}\mathbb{E}\left[\log\left\Vert C_{\xi_{n}}\right\Vert \right]=\lim_{n\rightarrow\infty}\frac{1}{n}\mathbb{E}\left[\log\left\Vert C_{\xi_{n}}\right\Vert \right].
\]
\end{thm}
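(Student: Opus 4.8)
The plan is to realize $n\mapsto\log\lVert C_{\xi_n(\omega)}\rVert$ as a subadditive cocycle over the shift $\theta$ and then quote Kingman's subadditive ergodic theorem. Set
\[
a_n(\omega):=\log\lVert C_{\xi_n(\omega)}\rVert\in[-\infty,\infty),\qquad n\ge1,
\]
with the conventions $\log 0=-\infty$ and $\log^+0=0$, so that \eqref{eq:5-1} makes sense even when some $\Phi_s$, hence $C_s$, vanishes. The first step is the multiplicative cocycle identity: since $\xi_{n+m}(\omega)=z_1(\omega)\cdots z_{n+m}(\omega)$ and $C_w=C_{z_n}\cdots C_{z_1}$ for $w=z_1\cdots z_n$, splitting the product $C_{z_{n+m}(\omega)}\cdots C_{z_1(\omega)}$ right after the factor $C_{z_{n+1}(\omega)}$ and using $z_{n+j}=z_j\circ\theta^n$ gives
\[
C_{\xi_{n+m}(\omega)}=C_{\xi_m(\theta^n\omega)}\,C_{\xi_n(\omega)}.
\]
Submultiplicativity of the operator norm then yields $a_{n+m}(\omega)\le a_n(\omega)+a_m(\theta^n\omega)$, i.e.\ $(a_n)$ is subadditive relative to $\theta$.

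Next I would verify the integrability hypothesis. Iterating subadditivity with $m=1$ gives $a_n\le\sum_{k=0}^{n-1}a_1\circ\theta^k$, hence $a_n^+\le\sum_{k=0}^{n-1}a_1^+\circ\theta^k$, and since $\theta$ is measure-preserving,
\[
\mathbb{E}\bigl[a_n^+\bigr]\le n\,\mathbb{E}\bigl[a_1^+\bigr]=n\,\mathbb{E}\bigl[\log^+\lVert C_{z_1}\rVert\bigr]<\infty
\]
by \eqref{eq:5-1}. Measurability of $\omega\mapsto a_n(\omega)$ is inherited from measurability of $s\mapsto\lVert C_s\rVert$ on $S$ (implicit already in \eqref{eq:5-1}), together with stability of measurability under finite products and norms. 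Thus $(a_n)$ meets the hypotheses of Kingman's subadditive ergodic theorem in its extended-real-valued form, and $(\Omega,\mathcal F,\mathbb P,\theta)$ is the i.i.d.\ Bernoulli shift, which is ergodic.

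Kingman's theorem then produces a $\theta$-invariant function $\omega\mapsto\lambda(\omega)\in[-\infty,\infty)$ with $\frac1n a_n(\omega)\to\lambda(\omega)$ $\mathbb P$-a.e.; by ergodicity of $\theta$, $\lambda$ is a.e.\ constant, and this constant $\lambda$ is the asserted almost-sure limit. For the two formulas, put $b_n:=\mathbb{E}[a_n]$. Then $b_{n+m}\le\mathbb{E}[a_n]+\mathbb{E}[a_m\circ\theta^n]=b_n+b_m$ (again using that $\theta$ preserves $\mathbb P$), so $(b_n)$ is a subadditive numerical sequence and Fekete's lemma gives $\lim_n b_n/n=\inf_n b_n/n$; this common value equals $\lambda$ — standard Kingman with $L^1$ convergence when $b_1>-\infty$, while if $b_1=-\infty$ then $b_n=-\infty$ for all $n$ and $\lambda=-\infty$ a.e., so the identities hold trivially. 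Finally $\lambda\le b_1=\mathbb{E}[\log\lVert C_{z_1}\rVert]\le\mathbb{E}[\log^+\lVert C_{z_1}\rVert]<\infty$, giving $\lambda\in[-\infty,\infty)$; the value $-\infty$ is genuinely attained, e.g.\ if $C_{z_1}=0$ with positive probability or more generally under super-exponential contraction.

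I do not anticipate a conceptual obstacle — the essential work is Kingman's theorem, which is available. The steps needing care are: (i) arranging the split so that the cocycle propagates \emph{forward} under $\theta$, given that $C_w=C_{z_n}\cdots C_{z_1}$ reverses letter order; and (ii) running the entire argument in the $[-\infty,\infty)$-valued regime, which is precisely what accommodates vanishing or rapidly contracting creation operators and forces the one-sided range $\lambda<\infty$. The only real technicality, measurability of $\omega\mapsto\lVert C_{\xi_n(\omega)}\rVert$ when $S$ is merely a measurable space, is routine given the measurable structure already required to state \eqref{eq:5-1}.
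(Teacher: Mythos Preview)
Your proposal is correct and follows essentially the same route as the paper: define $a_n(\omega)=\log\lVert C_{\xi_n(\omega)}\rVert$, verify the subadditive cocycle inequality $a_{n+m}\le a_n+a_m\circ\theta^n$ from the factorization $C_{\xi_{n+m}(\omega)}=C_{\xi_m(\theta^n\omega)}C_{\xi_n(\omega)}$ and submultiplicativity of the norm, check $\mathbb{E}[a_1^+]<\infty$ from \eqref{eq:5-1}, and invoke Kingman's theorem together with ergodicity of the Bernoulli shift. Your write-up is in fact slightly more careful than the paper's (explicit handling of the $-\infty$ case, the ergodicity step, and measurability), but there is no substantive difference in strategy.
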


\begin{proof}
For $n\geq1$, set 
\[
X_{n}\left(\omega\right):=\log\left\Vert C_{\xi_{n}\left(\omega\right)}\right\Vert \in[-\infty,\infty).
\]
By submultiplicativity of the operator norm and the definition of
$\xi_{n}$, 
\begin{align*}
X_{m+n}\left(\omega\right) & =\log\left\Vert C_{\xi_{m+n}\left(\omega\right)}\right\Vert \\
 & =\log\left\Vert C_{z_{m+n}\left(\omega\right)}\cdots C_{z_{1}\left(\omega\right)}\right\Vert \\
 & \leq\log\left\Vert C_{z_{m+n}\left(\omega\right)}\cdots C_{z_{m+1}\left(\omega\right)}\right\Vert +\log\left\Vert C_{z_{m}\left(\omega\right)}\cdots C_{z_{1}\left(\omega\right)}\right\Vert \\
 & =X_{n}\left(\theta^{m}\omega\right)+X_{m}\left(\omega\right).
\end{align*}
Thus $\left(X_{n}\right)_{n\geq1}$ is a subadditive, stationary process
over the measure-preserving transformation $\theta$. The integrability
hypothesis \eqref{eq:5-1} yields $\mathbb{E}\left[X^{+}_{1}\right]<\infty$.
Here, stationary with respect to $\theta$ means 
\[
X_{n}\circ\theta^{m}\overset{d}{=}X_{n}
\]
for all $m\geq0$. Equivalently, 
\[
\mathbb{E}\left[g\left(X_{n}\circ\theta^{m}\right)\right]=\mathbb{E}\left[g\left(X_{n}\right)\right]
\]
for all bounded measurable $g$.

By Kingman's subadditive ergodic theorem (see e.g., \cite{MR254907,MR356192,MR797411,MR3930614}),
there exists a deterministic $\lambda\in[-\infty,\infty)$ such that
\[
\frac{1}{n}X_{n}\left(\omega\right)\rightarrow\lambda\;\text{for \ensuremath{\mathbb{P}}-a.e. }\omega,
\]
and moreover, 
\[
\lambda=\inf_{n\geq1}\frac{1}{n}\mathbb{E}\left[X_{n}\right]=\lim_{n\rightarrow\infty}\frac{1}{n}\mathbb{E}\left[X_{n}\right].
\]
\end{proof}
\begin{cor}[kernel scalar growth]
\label{cor:5-2} Let $\lambda$ be as in \prettyref{thm:5-1}. For
all $x,y\in X$ and $a,b\in H$, 
\[
\limsup_{n\rightarrow\infty}\frac{1}{n}\log\left|\left\langle a,K_{\xi_{n}\left(\omega\right)}\left(x,y\right)b\right\rangle _{H}\right|\leq2\lambda,\quad\mathbb{P}\text{-a.e. }\omega.
\]
 
\end{cor}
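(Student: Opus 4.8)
The plan is to combine the realization identity of \prettyref{thm:B1} with the almost-sure norm asymptotics of \prettyref{thm:5-1}; no new machinery is needed. Fix $x,y\in X$ and $a,b\in H$. By \eqref{eq:B3}, for every $n$ and every $\omega\in\Omega$,
\[
\left\langle a,K_{\xi_{n}\left(\omega\right)}\left(x,y\right)b\right\rangle _{H}=\left\langle C_{\xi_{n}\left(\omega\right)}J_{x}a,C_{\xi_{n}\left(\omega\right)}J_{y}b\right\rangle _{M},
\]
so Cauchy--Schwarz in $M$ together with $\left\Vert C_{\xi_{n}\left(\omega\right)}v\right\Vert _{M}\le\left\Vert C_{\xi_{n}\left(\omega\right)}\right\Vert \left\Vert v\right\Vert _{M}$ gives the pointwise bound
\[
\left|\left\langle a,K_{\xi_{n}\left(\omega\right)}\left(x,y\right)b\right\rangle _{H}\right|\le\left\Vert C_{\xi_{n}\left(\omega\right)}\right\Vert ^{2}\left\Vert J_{x}a\right\Vert _{M}\left\Vert J_{y}b\right\Vert _{M}.
\]

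Next I would dispose of the degenerate case. If $\left\Vert J_{x}a\right\Vert _{M}\left\Vert J_{y}b\right\Vert _{M}=0$, the displayed inequality forces $\left\langle a,K_{\xi_{n}\left(\omega\right)}\left(x,y\right)b\right\rangle _{H}=0$ for all $n$, so the left-hand side of the claimed estimate is $-\infty$ and there is nothing to prove. Otherwise set $c:=\left\Vert J_{x}a\right\Vert _{M}\left\Vert J_{y}b\right\Vert _{M}\in\left(0,\infty\right)$, a finite constant independent of $n$ and $\omega$. Taking logarithms (with the convention $\log 0=-\infty$, which only weakens the left side) and dividing by $n$ yields
\[
\frac{1}{n}\log\left|\left\langle a,K_{\xi_{n}\left(\omega\right)}\left(x,y\right)b\right\rangle _{H}\right|\le\frac{2}{n}\log\left\Vert C_{\xi_{n}\left(\omega\right)}\right\Vert +\frac{\log c}{n}.
\]
The term $\frac{\log c}{n}\to0$ as $n\to\infty$, and by \prettyref{thm:5-1} the term $\frac{2}{n}\log\left\Vert C_{\xi_{n}\left(\omega\right)}\right\Vert \to2\lambda$ for $\mathbb{P}$-a.e.\ $\omega$ (interpreting the sum as $-\infty$ in the case $\lambda=-\infty$). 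Passing to $\limsup_{n\to\infty}$ on both sides gives the stated inequality $\mathbb{P}$-almost surely.

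I do not anticipate a real obstacle: the argument is a direct substitution of the deterministic realization identity followed by the Kingman limit. The only points that warrant a sentence of care are the trivial vanishing case $\left\Vert J_{x}a\right\Vert _{M}\left\Vert J_{y}b\right\Vert _{M}=0$ and the bookkeeping when $\lambda=-\infty$, both addressed above; everything else is routine.
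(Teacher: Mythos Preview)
Your argument is correct and follows exactly the paper's route: apply the realization identity \eqref{eq:B3} and Cauchy--Schwarz to get $\left|\left\langle a,K_{\xi_{n}}\left(x,y\right)b\right\rangle \right|\le\left\Vert C_{\xi_{n}}\right\Vert ^{2}\left\Vert J_{x}a\right\Vert \left\Vert J_{y}b\right\Vert$, then take logs, divide by $n$, and invoke \prettyref{thm:5-1}. Your additional handling of the degenerate case $\left\Vert J_{x}a\right\Vert \left\Vert J_{y}b\right\Vert =0$ and of $\lambda=-\infty$ is careful but not new content.
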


\begin{proof}
By the realization $K_{w}\left(x,y\right)=J^{*}_{x}C^{*}_{w}C_{w}J_{y}$
and Cauchy-Schwarz, 
\begin{align*}
\left|\left\langle a,K_{\xi_{n}\left(\omega\right)}\left(x,y\right)b\right\rangle _{H}\right| & =\left|\left\langle C_{\xi_{n}}J_{x}\left(a\right),C_{\xi_{n}}J_{y}\left(b\right)\right\rangle _{M}\right|\\
 & \leq\left\Vert C_{\xi_{n}}\right\Vert ^{2}_{M}\left\Vert J_{x}\left(a\right)\right\Vert _{M}\left\Vert J_{y}\left(b\right)\right\Vert _{M}.
\end{align*}
Take logs, divide by $n$, and use \prettyref{thm:5-1}.
\end{proof}
\begin{cor}[operator-norm bound]
 For any random word $\xi_{n}$, 
\[
\limsup_{n\rightarrow\infty}\frac{1}{n}\log\left\Vert K_{\xi_{n}\left(\omega\right)}\left(x,y\right)\right\Vert \leq2\lambda,\quad\mathbb{P}\text{-a.e. }\omega.
\]
\end{cor}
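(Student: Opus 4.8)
The plan is to follow the template of \prettyref{cor:5-2}, only replacing the scalar Cauchy--Schwarz bound by the operator-norm bound already isolated in the proof of \prettyref{cor:3-2}. Fix $x,y\in X$. By part~\eqref{enu:B1-3} of \prettyref{thm:B1} we have $K_{\xi_n(\omega)}(x,y)=J_x^*C_{\xi_n(\omega)}^*C_{\xi_n(\omega)}J_y$, so submultiplicativity of the operator norm, the $C^*$-identity $\|C_{\xi_n(\omega)}^*C_{\xi_n(\omega)}\|=\|C_{\xi_n(\omega)}\|_M^2$, and $\|J_x^*\|=\|J_x\|$ together give
\[
\bigl\|K_{\xi_n(\omega)}(x,y)\bigr\|_{B(H)}\le \|J_x\|_{H\to M}\,\bigl\|C_{\xi_n(\omega)}\bigr\|_M^2\,\|J_y\|_{H\to M}.
\]
The next step is to recall, exactly as computed in the proof of \prettyref{cor:3-2}, that the RKHS identity $\langle a,K(x,x)a\rangle_H=\|J_x a\|_M^2$ yields $\|J_x\|_{H\to M}^2=\|K(x,x)\|_{B(H)}$ and likewise $\|J_y\|_{H\to M}^2=\|K(y,y)\|_{B(H)}$, whence
\[
\bigl\|K_{\xi_n(\omega)}(x,y)\bigr\|_{B(H)}\le \bigl\|C_{\xi_n(\omega)}\bigr\|_M^2\,\sqrt{\|K(x,x)\|\,\|K(y,y)\|},
\]
a prefactor that is a constant independent of $n$ and of $\omega$.

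Taking logarithms (with the convention $\log 0=-\infty$) and dividing by $n$ then gives
\[
\frac1n\log\bigl\|K_{\xi_n(\omega)}(x,y)\bigr\|\le \frac2n\log\bigl\|C_{\xi_n(\omega)}\bigr\|_M+\frac1{2n}\log\bigl(\|K(x,x)\|\,\|K(y,y)\|\bigr).
\]
I would then pass to the $\limsup$ as $n\to\infty$: the last term tends to $0$, while by \prettyref{thm:5-1} the quantity $\tfrac1n\log\|C_{\xi_n(\omega)}\|_M$ converges $\mathbb{P}$-a.s.\ to $\lambda\in[-\infty,\infty)$, so the right-hand side converges a.s.\ to $2\lambda$. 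Using $\limsup(u_n+v_n)\le\limsup u_n+\limsup v_n$ then yields the claimed bound $\limsup_n \tfrac1n\log\|K_{\xi_n(\omega)}(x,y)\|\le 2\lambda$ almost surely.

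I do not expect any genuine obstacle here; this is a one-line variant of \prettyref{cor:5-2}. The only points warranting a sentence of care are: the degenerate case $\|K(x,x)\|=0$ or $\|K(y,y)\|=0$, where positive definiteness of $K$ forces $K(x,y)=0$ and the asserted bound holds trivially; and the book-keeping when $\lambda=-\infty$, which is harmless since we only assert an upper bound and the convention $\log 0=-\infty$ keeps every inequality above valid in $[-\infty,\infty)$.
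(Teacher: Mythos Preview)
Your proposal is correct and follows essentially the same approach as the paper: the paper also invokes the bound $\Vert K_{w}(x,y)\Vert_{B(H)}\leq\Vert C_{w}\Vert_{M}^{2}\Vert K(x,x)\Vert^{1/2}\Vert K(y,y)\Vert^{1/2}$ (citing the computation from \prettyref{cor:3-2}) and then applies \prettyref{thm:5-1} to $\Vert C_{\xi_n}\Vert_M$. Your write-up is in fact slightly more careful than the paper's, in that you explicitly handle the degenerate case $\Vert K(x,x)\Vert=0$ and the convention when $\lambda=-\infty$.
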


\begin{proof}
Recall that, for any word $w=s_{1}\cdots s_{n}\in S^{*}$ and $x,y\in X$,
\begin{align*}
\left\Vert K_{w}\left(x,y\right)\right\Vert _{B\left(H\right)} & \leq\left\Vert C_{w}\right\Vert ^{2}_{M}\left\Vert J_{x}\right\Vert _{M}\left\Vert J_{y}\right\Vert _{M}\\
 & =\left\Vert C_{w}\right\Vert ^{2}_{M}\left\Vert K\left(x,x\right)\right\Vert ^{1/2}_{B\left(H\right)}\left\Vert K\left(y,y\right)\right\Vert ^{1/2}_{B\left(H\right)}.
\end{align*}
Use the random word $\xi_{n}$, and apply \prettyref{thm:5-1} to
$\left\Vert C_{\xi_{n}}\right\Vert _{M}$ to get the a.s. $\limsup$. 
\end{proof}
\begin{cor}[RN formulation under subunitality]
 Assume each $\Phi_{z}$ is subunital. Then for any $w\in S^{*}$
there exists a positive contraction $A_{w}\in B\left(\mathcal{H}_{\tilde{K}}\right)$
with 
\[
K_{w}\left(x,y\right)=V^{*}_{x}A_{w}V_{y}
\]
with $0\leq A_{w}\leq I$ (see \prettyref{thm:4-2}). Hence, for any
$x,y\in X$, 
\[
\limsup_{n\rightarrow\infty}\frac{1}{n}\log\left\Vert K_{\xi_{n}\left(\omega\right)}\left(x,y\right)\right\Vert \leq\limsup_{n\rightarrow\infty}\frac{1}{n}\log\left\Vert A_{\xi_{n}\left(\omega\right)}\right\Vert \leq2\lambda\quad\text{a.s.}
\]
In particular, if $\lambda<0$, then $\left\Vert K_{\xi_{n}\left(\omega\right)}\left(x,y\right)\right\Vert \rightarrow0$
exponentially fast almost surely.
\end{cor}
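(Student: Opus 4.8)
The plan is to chain together three facts already in hand: the Radon--Nikodym representation of \prettyref{thm:4-2}, the estimate $\left\Vert A_{w}\right\Vert \le\left\Vert C_{w}\right\Vert ^{2}$ recorded in the remark following it, and the Lyapunov limit of \prettyref{thm:5-1}. First I would observe that subunitality makes the hypotheses of the present section automatic: part~\eqref{enu:B1-2} of \prettyref{thm:B1} gives $\left\Vert C_{z}\right\Vert \le1$ for every $z\in S$, so $\log^{+}\left\Vert C_{z_{1}}\right\Vert \equiv0$ and the integrability condition~\eqref{eq:5-1} holds trivially; hence \prettyref{thm:5-1} applies and $\tfrac1n\log\left\Vert C_{\xi_{n}\left(\omega\right)}\right\Vert \to\lambda$ for $\mathbb{P}$-a.e.\ $\omega$, where $\lambda\in[-\infty,\infty)$. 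By \prettyref{thm:4-2}, for each $w\in S^{*}$ there is a positive contraction $A_{w}=\iota^{*}C_{w}^{*}C_{w}\iota$ with $0\le A_{w}\le I_{\mathcal{H}_{\tilde{K}}}$ and $K_{w}\left(x,y\right)=V_{x}^{*}A_{w}V_{y}$.

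Next I would pass to operator norms. From $K_{w}\left(x,y\right)=V_{x}^{*}A_{w}V_{y}$ and submultiplicativity, $\left\Vert K_{w}\left(x,y\right)\right\Vert _{B\left(H\right)}\le\left\Vert V_{x}\right\Vert \left\Vert A_{w}\right\Vert \left\Vert V_{y}\right\Vert $; and since $J_{x}=\iota\circ V_{x}$ with $\iota$ isometric, the RKHS computation already used in the proof of \prettyref{cor:3-2} gives $\left\Vert V_{x}\right\Vert ^{2}=\left\Vert K\left(x,x\right)\right\Vert $ and $\left\Vert V_{y}\right\Vert ^{2}=\left\Vert K\left(y,y\right)\right\Vert $, so
\[
\left\Vert K_{w}\left(x,y\right)\right\Vert \le\left\Vert A_{w}\right\Vert \sqrt{\left\Vert K\left(x,x\right)\right\Vert \left\Vert K\left(y,y\right)\right\Vert }.
\]
Taking logarithms, dividing by $n=\left|w\right|$, substituting $w=\xi_{n}\left(\omega\right)$, and letting $n\to\infty$, the constant factor contributes $0$ to the limit; this yields the first inequality $\limsup_{n}\tfrac1n\log\left\Vert K_{\xi_{n}\left(\omega\right)}\left(x,y\right)\right\Vert \le\limsup_{n}\tfrac1n\log\left\Vert A_{\xi_{n}\left(\omega\right)}\right\Vert $. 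For the second, $\left\Vert A_{w}\right\Vert =\left\Vert \iota^{*}C_{w}^{*}C_{w}\iota\right\Vert \le\left\Vert C_{w}\right\Vert ^{2}$, so $\tfrac1n\log\left\Vert A_{\xi_{n}}\right\Vert \le\tfrac2n\log\left\Vert C_{\xi_{n}}\right\Vert \to2\lambda$ a.s.\ by the first step, giving $\limsup_{n}\tfrac1n\log\left\Vert A_{\xi_{n}\left(\omega\right)}\right\Vert \le2\lambda$.

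For the final assertion, suppose $\lambda<0$ (the case $\lambda=-\infty$ included). Fix any $c$ with $2\lambda<c<0$. On the set of full measure where the $\limsup$ bound holds, $\tfrac1n\log\left\Vert K_{\xi_{n}\left(\omega\right)}\left(x,y\right)\right\Vert <c$ for all sufficiently large $n$, i.e.\ $\left\Vert K_{\xi_{n}\left(\omega\right)}\left(x,y\right)\right\Vert <e^{cn}$, which tends to $0$ exponentially fast since $c<0$. I do not anticipate a genuine obstacle: every ingredient is already established, and the proof is just a careful chaining of \prettyref{thm:4-2}, the $\left\Vert A_{w}\right\Vert \le\left\Vert C_{w}\right\Vert ^{2}$ remark, and \prettyref{thm:5-1}; the only points deserving a word of care are the automatic verification of~\eqref{eq:5-1} under subunitality and the harmless bookkeeping when $\lambda=-\infty$, in which case $2\lambda=-\infty$ and all three $\limsup$'s are $-\infty$.
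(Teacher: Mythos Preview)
Your proposal is correct and follows essentially the same route as the paper: invoke \prettyref{thm:4-2} to obtain $A_{w}=\iota^{*}C_{w}^{*}C_{w}\iota$ with $K_{w}(x,y)=V_{x}^{*}A_{w}V_{y}$, use $\|A_{w}\|\le\|C_{w}\|^{2}$, and feed this into \prettyref{thm:5-1}. Your version is simply more explicit—spelling out the $\|V_{x}\|$ factors, noting that \eqref{eq:5-1} is automatic under subunitality, and handling the $\lambda=-\infty$ bookkeeping—whereas the paper compresses all of this into a one-line reference.
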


\begin{proof}
This follows from \prettyref{sec:4}, where $A_{w}=\iota^{*}C^{*}_{w}C_{w}\iota$,
so $0\leq A_{w}\leq I$ and $\left\Vert A_{w}\right\Vert \leq\left\Vert C_{w}\right\Vert ^{2}$,
combined with \prettyref{thm:5-1}. 
\end{proof}
\begin{cor}[uniform cb-bound]
\label{cor:5-5} If $\sup_{z\in S}\left\Vert \Phi_{z}\right\Vert _{\mathrm{cb}}\leq L<\infty$,
then $\left\Vert C_{z}\right\Vert \leq L^{1/2}$ for all $z$, and
for any word $w$ of length $\left|w\right|$, 
\[
\left\Vert K_{w}\left(x,y\right)\right\Vert \leq L^{\left|w\right|}\left\Vert K\left(x,x\right)\right\Vert ^{1/2}\left\Vert K\left(y,y\right)\right\Vert ^{1/2}.
\]
In particular, if $L<1$, then for the random words $\xi_{n}$, 
\[
\left\Vert K_{\xi_{n}\left(\omega\right)}\left(x,y\right)\right\Vert \leq L^{n}\left\Vert K\left(x,x\right)\right\Vert ^{1/2}\left\Vert K\left(y,y\right)\right\Vert ^{1/2},\qquad\forall n,
\]
and thus $\lambda\leq\frac{1}{2}\log L<0$ and exponential decay holds
almost surely. 
\end{cor}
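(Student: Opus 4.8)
The plan is to chain together the norm estimates already in place: part \eqref{enu:B1-2} of \prettyref{thm:B1} for the one-step bound on the creation operators $C_z$, submultiplicativity of the operator norm for words, the realization formula \eqref{eq:B2} together with the Cauchy--Schwarz computation used in the proof of \prettyref{cor:3-2} for the passage back to kernel norms, and finally \prettyref{thm:5-1} to convert the resulting deterministic length-$n$ bound into the statement about the Lyapunov exponent $\lambda$.

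First I would record that, by part \eqref{enu:B1-2} of \prettyref{thm:B1}, each $C_z$ satisfies $\|C_z\|\le\|\Phi_z\|_{\mathrm{cb}}^{1/2}\le L^{1/2}$, so for any word $w=s_1\cdots s_n\in S^{*}$ the right-regular product $C_w=C_{s_n}\cdots C_{s_1}$ obeys $\|C_w\|\le\prod_{j=1}^{n}\|C_{s_j}\|\le L^{n/2}$. Feeding this into the realization $K_w(x,y)=J_x^{*}C_w^{*}C_wJ_y$ of \eqref{eq:B2} and arguing exactly as in \prettyref{cor:3-2}, Cauchy--Schwarz gives $\|K_w(x,y)\|_{B(H)}\le\|C_w\|^2\,\|J_x\|_{H\to M}\,\|J_y\|_{H\to M}$, while the RKHS identity $\|J_x\|_{H\to M}^2=\sup_{\|a\|_H=1}\langle a,K(x,x)a\rangle_H=\|K(x,x)\|_{B(H)}$ (and likewise with $y$) turns this into the claimed bound $\|K_w(x,y)\|\le L^{|w|}\|K(x,x)\|^{1/2}\|K(y,y)\|^{1/2}$.

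For the random part, I would specialize the last display to $w=\xi_n(\omega)$, which has length exactly $n$, yielding the uniform-in-$\omega$ estimate $\|K_{\xi_n(\omega)}(x,y)\|\le L^n\|K(x,x)\|^{1/2}\|K(y,y)\|^{1/2}$ valid for all $n$. Since $\|C_{z_1}\|\le L^{1/2}$ is a fixed finite constant, $\log^{+}\|C_{z_1}\|\le\max\{0,\frac{1}{2}\log L\}<\infty$, so the integrability hypothesis \eqref{eq:5-1} holds automatically and \prettyref{thm:5-1} applies; from $\frac{1}{n}\log\|C_{\xi_n}\|\le\frac{1}{2}\log L$ for every $n$ one reads off $\lambda=\inf_{n\ge1}\frac{1}{n}\mathbb{E}[\log\|C_{\xi_n}\|]\le\frac{1}{2}\log L$, which is strictly negative when $L<1$. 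The deterministic bound displayed above already forces $\|K_{\xi_n(\omega)}(x,y)\|\to0$ at the geometric rate $L^n$ for every $\omega$, hence a fortiori $\mathbb{P}$-almost surely.

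I do not expect a real obstacle here: the corollary is essentially a bookkeeping assembly of \prettyref{thm:B1}, the Cauchy--Schwarz step from the proof of \prettyref{cor:3-2}, and \prettyref{thm:5-1}. The two points that warrant a sentence of care are the observation that the integrability condition \eqref{eq:5-1} is automatic once the uniform cb-bound $L$ is finite (the random variable $\log^{+}\|C_{z_1}\|$ is bounded), and the remark that the random word $\xi_n(\omega)$ has length precisely $n$, so the length-$n$ deterministic estimate specializes with no loss of constant.
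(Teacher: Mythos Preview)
Your proof is correct and follows exactly the same approach as the paper, which simply cites part \eqref{enu:B1-2} of \prettyref{thm:B1} for $\|C_z\|\le\|\Phi_z\|_{\mathrm{cb}}^{1/2}$ and declares the rest immediate. You have merely spelled out the ``immediate'' steps (submultiplicativity, the Cauchy--Schwarz computation from \prettyref{cor:3-2}, and the specialization to $\xi_n$ with the automatic integrability check), which is entirely appropriate.
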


\begin{proof}
The bound $\left\Vert C_{z}\right\Vert \leq\left\Vert \Phi_{z}\right\Vert ^{1/2}_{\mathrm{cb}}$
is \prettyref{thm:B1}, part \eqref{enu:B1-2}. The rest is immediate. 
\end{proof}
\begin{rem}[UCP, spectral radius]
 Let $\Phi_{z}\equiv\Phi$ be unital CP, so $C_{z}=C$ and $C_{\xi_{n}}=C^{n}$.
Then 
\[
\lambda=\lim_{n\rightarrow\infty}\frac{1}{n}\log\left\Vert C^{n}\right\Vert =\log r\left(C\right)\leq0,
\]
where $r\left(C\right)$ is the spectral radius. 

By \prettyref{thm:3-1}, $C^{*n}C^{n}\xrightarrow{s}P$, where $P$
is the orthogonal projection onto $M_{iso}=\left\{ v:\left\Vert C^{n}v\right\Vert =\left\Vert v\right\Vert ,\forall n\right\} $.
\begin{enumerate}
\item If $P\neq0$, then $r\left(C\right)=1$ and $\lambda=0$. In this
case, 
\[
K_{n}\left(x,y\right)=J^{*}_{x}C^{*n}C^{n}J_{y}\rightarrow J^{*}_{x}PJ_{y}=\overline{K}\left(x,y\right).
\]
\item If $P=0$, then $K_{n}\left(x,y\right)\rightarrow0$ for each $x,y\in X$.
Hence $\lambda=\log r\left(C\right)\leq0$, with $\lambda\leq0$ giving
exponential decay and $\lambda=0$ allowing subexponential decay. 
\end{enumerate}
\end{rem}

\bibliographystyle{amsalpha}
\bibliography{ref}

\end{document}